\newtheorem{theorem}{Theorem}
\newtheorem{lemma}[theorem]{Lemma}
\newtheorem{proposition}[theorem]{Proposition}
\newtheorem{corollary}[theorem]{Corollary}
\newtheorem{conjecture}[theorem]{Conjecture}
\newtheorem{question}[theorem]{Question}
\newcommand{\red}{\texttt{R}}
\newcommand{\blue}{\texttt{B}}
\newcommand{\tie}{\texttt{T}}
\newcommand{\cho}[2]{\begin{pmatrix}
#1\\#2
\end{pmatrix}}
\newcommand{\ignore}[1]{}
\title{\vspace*{-.5in}Central Limit Theorem for Majority Dynamics: Bribing Three Voters Suffices}
\author{Ross Berkowitz\footnote{Dept.\ of Mathematics, Yale University, New Haven CT, USA \qquad \texttt{ross.berkowitz@yale.edu}} \and Pat Devlin\footnote{Dept.\ of Mathematics, Yale University, New Haven CT, USA \qquad \texttt{patrick.devlin@yale.edu}}}
\date{October 15, 2020}
\begin{document}

\maketitle
\renewcommand{\thefootnote}{\fnsymbol{footnote}}
\footnotetext{AMS 2010 subject classification: 05C80, 05C78, 60F05, 68R10, 60C05}
\footnotetext{Key words and phrases:  majority dynamics, random graph, central limit theorem, Fourier analysis}

\setcounter{footnote}{0}
\renewcommand{\thefootnote}{\arabic{footnote}}

\abstract{Given a graph $G$ and some initial labelling $\sigma : V(G) \to \{Red, Blue\}$ of its vertices, the \textit{majority dynamics model} is the deterministic process where at each stage, every vertex simultaneously replaces its label with the majority label among its neighbors (remaining unchanged in the case of a tie).  We prove---for a wide range of parameters---that if an initial assignment is fixed and we independently sample an Erd\H{o}s--R\'enyi random graph, $G_{n,p}$, then after one step of majority dynamics, the number of vertices of each label follows a central limit law.  As a corollary, we provide a strengthening of a theorem of Benjamini, Chan, O'Donnell, Tamuz, and Tan about the number of steps required for the process to reach unanimity when the initial assignment is also chosen randomly.

Moreover, suppose there are initially three more red vertices than blue.  In this setting, we prove that if we independently sample the graph $G_{n,1/2}$, then with probability at least $51\%$, the majority dynamics process will converge to every vertex being red.  This improves a result of Tran and Vu who addressed the case that the initial lead is at least 10.}


\section{Introduction}
\textit{Majority dynamics} is a model for belief propagation on a graph.  In this process, each vertex of the graph has one of two opinions (which we will refer to as the colors `red' and `blue').  Each round, the opinions of the vertices are all updated simultaneously, and each vertex replaces their opinion so as to match the majority opinion that was held by their neighbors  (in the case of a tie, a vertex does not change its opinion).  This updating step is then iterated, and the resulting process is referred to as majority dynamics.  This model has been used in a variety of settings including social choice theory \cite{social1, nguyen2020}, economics \cite{econ2, econ1}, and statistical physics \cite{physicis1, physics2}, and many related processes have been proposed as well (see \cite{abdullah2015, amir2019majority, balogh2007bootstrap, zehmakan2018two} for just a few examples).

The majority dynamics model has gotten a lot of attention for fixed deterministic graphs, with many of the primary questions being related to the long-term behavior \cite{periodic, ginosar2000}.  The problem has also been studied in the settings of fixed graphs with random initial colorings \cite{randomColors, tamuz2015, colorWar}.  We direct the interested reader to \cite{mossel2014} and \cite{shah2009gossip} for helpful surveys.

The situation of majority dynamics on random graphs has also been a focus of interest, but as discussed below this setting is not particularly well understood.  Before diving into this literature, let us first establish the following notation to be used throughout.

\subsubsection*{Notation}
The Erd\H{o}s--R\'enyi random graph, $G_{n,p}$, is defined as the graph with a vertex set of size $n$ and each potential edge independently included with probability $p$.  Given a graph and an intial coloring $R_0$ and $B_0$, we will let $R_i$ (resp.\ $B_i$) denote the set of vertices that are red (resp.\ blue) after $i$ steps of majority dynamics.  The initial coloring $R_0, B_0$ may be fixed or determined randomly, but in either case it will always be independent of the random graph $G_{n,p}$.

Asymptotic notation is understood to mean as the relevant parameter (typically $n$) tends to infinity.  We say an event happens \textit{with high probability} to mean its probability tends to $1$. All logarithms have base $e$.  Finally, we use the standard notation $\Phi(t) = (2 \pi)^{-1/2} \int_{-\infty} ^{t} e^{-x^2/2} dx$ to denote the probability that a mean 0, variance 1 Gaussian random variable is at most $t$.

\subsubsection*{Previous results for random graphs}
In studying majority dynamics on random graphs, a common approach is as follows.  One first tries to control the number of red vertices after one \cite{benjamini2016, zehmakan, tran-vu} or two \cite{whpFolks, zehmakan} steps (hoping that in these initial steps, one color will have managed to establish a substantial lead).  After this, one typically argues that with probability tending to 1, if any color ever manages to establish a sufficiently large lead, then this lead is going to increase \textit{even if the vertices were adversarially recolored} (provided this recoloring preserves the number of vertices of each color).

The second part of this strategy (i.e., showing that with high probability large leads cannot be squandered) often involves some variant of a relatively direct union bound using the fact that the random graph is sufficiently expansive.  On the other hand, the first part of this strategy (i.e., controlling the number of vertices of each color after one or two steps) is considerably more nuanced.

Following this general strategy, Benjamini, Chan, O'Donnell, Tamuz, and Tan \cite{benjamini2016} obtained rough estimates for the mean and variance of $|R_1|$ under the assumption that the initial coloring $R_0, B_0$ is assigned uniformly at random with $|R_0| \geq |B_0|$.  Their argument was based on a combination of Fourier analysis and spectral graph techniques, and using their estimates, they were able to prove that if $p \geq C/\sqrt{n}$ and $|R_0| \geq |B_0|$, then $\mathbb{P}(R_4 = V) \geq 0.4 - o(1)$.  Thus, for $p \geq C /\sqrt{n}$, they proved that with probability at least $0.4$, for a randomized initial coloring of $G_{n,p}$, the color that was initially ahead will propagate to include all the vertices of the graph within at most four steps.  When $p \geq C/n^{1/3}$, their same estimates on $|R_1|$ also prove that with probability at least $0.4$, the same result holds after at most 3 steps.

Analyzing the same setting, Fountoulakis, Kang, and Makai \cite{whpFolks} improved on the first of these results by focusing instead on estimating the mean and variance of $|R_2|$.  Namely, they proved that if $R_0, B_0$ is chosen uniformly at random and $p \geq C/\sqrt{n}$, then \textit{with probability tending to 1}, the color that was initially ahead will propagate to the entire graph within at most four steps.

For smaller values of $p$, Zehmakan \cite{zehmakan} studied the case that the vertices of $G_{n,p}$ are each initially colored blue independently with probability $q$.  For $q \leq 1/2 - \omega(1/\sqrt{np})$ he proved that when $p \geq (1+\varepsilon) \log(n)/n$, then with high probability, majority dynamics converges to all vertices being red after a bounded number of steps.  (For smaller $p$, there will be isolated blue vertices.)  G\"artner and Zehmakan \cite{gartner2018majority} also proved a similar result for $d$-regular random graphs.  See \cite{benjamini2016} for several conjectures about how the majority dynamics process evolves for values of $p$ less than $C/\sqrt{n}$ and $|R_0| \sim |B_0|$.

Finally, in a different direction, Tran and Vu \cite{tran-vu} considered the case that the initial coloring is \textit{fixed} with $|R_0| - |B_0| \geq 10$.  In this setting, they argued that with probability at least $90\%$, the graph $G_{n,1/2}$  will result in all of its vertices being red.

\subsubsection*{Our results}
It is worth emphasizing that in the above papers, the key first step has been to analyze the distribution of $|R_1|$ or $|R_2|$.  In fact, essentially all of these previous results have followed from some new estimate on the mean or variance of these random variables.  To quote \cite{benjamini2016}, ``the main task is to analyze what happens at time 1."  This brings us to our primary result.

\begin{theorem}\label{good CLT}
Let $R_0$, $B_0$ be any fixed initial coloring of $n$ vertices, and suppose we independently sample the random graph $G_{n,p}$.  Let $|R_1|$ denote the number of vertices that will be red after one step of majority dynamics.  Let $\Delta = \Big| |R_0| - |B_0| \Big|$, let $\sigma = \sqrt{np(1-p)}$, and define
\[
\mu = \mathbb{P} \Big(Bin(|R_0|,p) \geq Bin(|B_0|,p) \Big) \cdot \mathbb{P} \Big(Bin(|R_0|,p) \leq Bin(|B_0|,p) \Big).
\]
\begin{itemize}
\item[(i)] For any choice of parameters, the variance of $|R_1|$ satisfies $Var(|R_1|) = n \mu + \mathcal{O}(\Delta + n/\sigma)$.
\item[(ii)] If $\log(1/\mu) = o(\log(\sigma))$ and $\sigma \to \infty$, then $Var(|R_1|) \sim n \mu$, and $|R_1|$ obeys a central limit law.  Namely, for all fixed reals $a < b$, we have
\[
\lim_{n \to \infty} \mathbb{P} \left( \dfrac{|R_1| - \mathbb{E}|R_1|}{\sqrt{n \mu}} \in [a,b] \right) = \dfrac{1}{\sqrt{2\pi}} \int_{a} ^{b} e^{-x^2 /2} dx = \Phi(b) - \Phi(a).
\]
\end{itemize}
In particular, if $\Delta p = \mathcal{O}(\sigma)$ and $\sigma \to \infty$, then the hypotheses and conclusion of (ii) hold.
\end{theorem}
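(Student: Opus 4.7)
The plan is to write $|R_1| = \sum_v X_v$, where $X_v := \mathbf{1}[v \in R_1]$ is the indicator that vertex $v$ is red after one step. Each $X_v$ is a Boolean function of only the $n-1$ edge indicators incident to $v$ (it asks whether the red-neighbor count of $v$ meets its blue-neighbor count, with the tie rule determined by the initial color of $v$). The structural fact that powers the whole argument is that for any two distinct vertices $u,v$, the edges at $u$ and the edges at $v$ intersect in the single edge $\{u,v\}$; consequently $X_u$ and $X_v$ are conditionally independent given the indicator $\xi_{\{u,v\}}$ of that edge. This single-shared-edge structure is what makes both the variance formula in (i) and the CLT in (ii) accessible by direct means.

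For part (i), I would expand $Var(|R_1|) = \sum_v Var(X_v) + \sum_{u \neq v} \cov(X_u, X_v)$. For the diagonal, $Var(X_v) = \mathbb{E}[X_v](1-\mathbb{E}[X_v])$; for $v \in R_0$ the mean is $\mathbb{E}[X_v] = \mathbb{P}(Bin(|R_0|-1,p) \geq Bin(|B_0|,p))$, which differs from the probability appearing in $\mu$ by at most $\mathcal{O}(1/\sigma)$ by a standard local central limit theorem for the difference of two independent binomials, and symmetrically for $v \in B_0$; summing yields $\sum_v Var(X_v) = n\mu + \mathcal{O}(\Delta + n/\sigma)$. For the off-diagonal, conditional independence yields $\cov(X_u,X_v) = p(1-p)\,\alpha_u \alpha_v$, where each $\alpha_v := \mathbb{P}(X_v = 1 \mid \xi_{\{u,v\}}=1) - \mathbb{P}(X_v = 1 \mid \xi_{\{u,v\}}=0)$ is a single-point binomial mass and so is individually $\mathcal{O}(1/\sigma)$; the double sum then has to be controlled to $\mathcal{O}(n/\sigma)$ by tracking the signs of the $\alpha_v$'s across red/red, red/blue, and blue/blue pairings.

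For part (ii), the plan to upgrade the variance estimate to a central limit law is to apply the martingale CLT to the Doob martingale formed by revealing the edges of $G_{n,p}$ one at a time in an arbitrary order. Since the $i$-th edge $e_i = \{u,w\}$ affects only $X_u$ and $X_w$, each increment $M_i - M_{i-1}$ has the form $(\xi_{e_i} - p)(a_1 - a_0)$ with $|a_1 - a_0| \leq 2$. Under the hypothesis $\log(1/\mu) = o(\log\sigma)$ together with $\sigma \to \infty$ one obtains $n\mu \to \infty$, so every increment is $\mathcal{O}(1) = o(\sqrt{n\mu})$ and Lindeberg is automatic. The substantive task is showing that the predictable quadratic variation $\sum_i \mathbb{E}[(M_i - M_{i-1})^2 \mid \mathcal{F}_{i-1}]$ concentrates around its expectation $Var(|R_1|) \sim n\mu$; this reduces to a second-moment computation on a sum of squared single-edge influences whose means are again governed by local-CLT probabilities.

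The main obstacle will be the sharp off-diagonal covariance bound in (i): naively $|\alpha_u \alpha_v| = \mathcal{O}(1/\sigma^2)$ multiplied by the $\binom{n}{2}$ pairs yields only $\mathcal{O}(n)$, and recovering the claimed $\mathcal{O}(n/\sigma)$ requires a refined local-CLT estimate together with genuine cancellation in the double sum (essentially, $\alpha_v$ is well approximated by a local Gaussian density whose sign and magnitude across red/blue pairings produce the needed savings). Once that estimate is in hand, the concentration of the quadratic variation in (ii) follows from the same anticoncentration input applied to a sum of squared influences, so the two parts of the theorem are really two facets of the same underlying analysis.
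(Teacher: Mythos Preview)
Your treatment of part (i) is, up to language, the paper's own computation for the second moment: the identity $\cov(X_u,X_v)=p(1-p)\,\alpha_u\alpha_v$ is precisely the level-one $p$-biased Fourier expansion (since $X_u$ and $X_v$ share exactly one input coordinate, only the single degree-one coefficient on the edge $\{u,v\}$ survives), and the ``tracking of signs across red/red, red/blue, blue/blue pairings'' you describe is exactly the red/blue pairing the paper carries out via part (v) of its Fourier lemma. One small correction: after cancellation the off-diagonal leaves a residual of order $\Delta^2/n\le\Delta$, so it is $\mathcal{O}(\Delta+n/\sigma)$ rather than the $\mathcal{O}(n/\sigma)$ you wrote.

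For part (ii) your route genuinely diverges from the paper's. The paper does \emph{not} use a martingale CLT; it proves the full moment asymptotic $\mathbb{E}[Z^k]=(k-1)!!\,(4n\mu)^{k/2}+\mathcal{O}\big(n^{k/2}(1/\sigma+\Delta/n)\big)$ for every fixed $k$, by expanding $\mathbb{E}\big[\prod_i Z_{w_i}\big]$ in the Fourier basis, observing that the only surviving edge-sets are perfect matchings on the tuple, and then using the same one-edge cancellation to kill all tuples containing an unpaired vertex. Your edge-exposure martingale approach could in principle be lighter, but the step you wave through is not the same computation as in (i): the predictable quadratic variation is $p(1-p)\sum_i A_i^2$ where each $A_i$ is the \emph{conditional} influence of $e_i$ given all previously revealed edges---a filtration-dependent random variable, not the static $\alpha_v$ from part (i). Even for a single vertex the predictable quadratic variation of the Doob martingale of $X_v$ has fluctuations you have not bounded, and the cross terms $A_i^{(u)}A_i^{(w)}$ summed over all $\binom{n}{2}$ edges reintroduce a conditional version of the same $\Theta(n)$-versus-$\mathcal{O}(n/\sigma)$ cancellation problem. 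The paper's moment method sidesteps this entirely: every quantity is an unconditional expectation, and the cancellation is handled once, uniformly in $k$, by the Fourier estimates.
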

This provides an unqualified upper bound that $Var(|R_1|) = \mathcal{O}(n)$, and for a wide range of parameters, it also gives a central limit theorem.  Crucially, the statement is for a given value of $|R_0|$ (which may certainly depend on $n$) as opposed to $|R_0|$ being a random variable.  Conditioning on $|R_0|$ in this way enables us to control all the moments of $|R_1|$, whereas otherwise the variance in this quantity would be dominated by the variance of $|R_0|$.

Theorem \ref{good CLT} is proven by the method of moments, but to illustrate the main challenge, consider the calculation of the variance of $|R_1|$.  The difficulty here is that one quickly arrives at this variance as a sum of covariances, but each covariance term is large in absolute value.  Bounding this sum by its maximum term gives the incorrect asymptotic growth, and the key idea is therefore devising a way to analyze the large amount of cancellation in these sums.  We do this by studying the relevant $p$-biased Fourier coefficients (with random variables viewed as functions of the edges present in $G_{n,p}$).

At this point, it is worth noting that for a wide range of parameters (concretely, for $|R_0| = |B_0|$ and $p$ constant) the distribution of $|R_1|$ converges to a normal, but the distribution of $|R_2|$ certainly will not.  This is simply because if $|R_1|$ is sufficiently far from $n/2$ (e.g., at least $\Omega(\sqrt{n/p})$ away [see Proposition \ref{lemma:sex panther}]) then this would force $|R_2|$ to be very far away from $n/2$.  Thus, the mass of the distribution of $|R_1|$ is pushed very far away from $n/2$, resulting in a particularly bimodal distribution (having mean $n/2$, variance $\Omega(n^2)$, but being bounded between $0$ and $n$).  The distribution of $|R_3|$ would be even more exaggerated, and it would likely be essentially two peaks at $0$ and $n$.  For many settings of parameters, we believe the distribution of $|R_4|$ would converge to literally two point masses at $0$ and $n$.  Thus, we ask the following (with the unproven belief that $|R_k|$ is often only interesting for $k \in \{1,  2\}$).

\begin{question}
How is $|R_k|$ distributed?  (For concreteness, take $|R_0| = |B_0|$ and $p = 1/2$)
\end{question}

Regardless of the above, Theorem \ref{good CLT} is already strong enough to imply most of the known results (including the bounds on $|R_2|$ from \cite{whpFolks}), but to leverage this we use the following.

\begin{proposition}\label{lemma:sex panther}
For each $\varepsilon > 0$, there are constants $C>0$ and $q \in (0,1)$ such that if $np(1-p) > C$ and $G \sim G_{n,p}$, then with probability at least $1 - q^n$, the following statement simultaneously holds for every choice of $\delta \in (\varepsilon, 1-\varepsilon)$ and every $R \subseteq V(G)$.  If $|R| \geq n/2 + \alpha \sqrt{n(1-p)/p}$ and 
\[
 \sup_{0 \leq \gamma \leq 2} e^{-\gamma^2 / 2} \left[\Phi \left(\frac{\gamma - 2\alpha}{\sqrt{1 - \delta}} \right) \right]^{\delta} \leq \dfrac{1}{4} - \varepsilon,
\]
then the number of vertices of $G$ having at least as many edges to $V \setminus R$ as to $R$ is at most $\delta n$.  In particular, if $\varepsilon = 10^{-10}$ and $\delta = 0.499$, we may take $\alpha = 0.85$.
\end{proposition}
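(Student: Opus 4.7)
The plan is a double union bound. For each candidate $R \subseteq V(G)$ with $|R| \geq n/2 + \alpha\sqrt{n(1-p)/p}$ and each candidate ``bad'' set $S \subseteq V(G)$ of size $\lceil \delta n \rceil$, I will bound $p_{R,S} := \pr[\forall v \in S: Z_v \geq 0]$, where $Z_v := |N(v) \cap (V \setminus R)| - |N(v) \cap R|$. Paying $\binom{n}{\lceil \delta n \rceil}\binom{n}{|R|} \leq 4^n$ for the two union bounds over $(R, S)$, the overall failure probability is at most $4^n \cdot \max_{R,S} p_{R,S}$; the target $q^n$ will then follow from $p_{R,S} \leq \mathrm{poly}(n) \cdot M^n$ with $M \leq 1/4 - \varepsilon$, which is precisely the shape of the hypothesis.

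For a fixed pair $(R, S)$, I will condition on $T$, the edge set inside $S$. Writing $Z_v = U_v + V_v(T)$ for $v \in S$, where $U_v$ collects contributions from edges going from $v$ to $V \setminus S$, the $U_v$'s are mutually independent given $T$, so
\[
\pr[\forall v \in S : Z_v \geq 0 \mid T] = \prod_{v \in S}\pr[U_v \geq -V_v(T)].
\]
Each $U_v$ is a difference of two independent binomials with mean $-2\alpha\sigma + p(s_R - s_B)$ and variance $\approx (1-\delta)\sigma^2$, where $s_R = |S \cap R|$ and $s_B = |S \setminus R|$. A Berry--Esseen type estimate bounds each factor by $\Phi(a_v(T)) + o(1)$ for appropriate $a_v(T)$. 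Using that $\log \Phi$ is concave on $\mathbb{R}$ (a standard log-concavity property of the normal CDF), Jensen's inequality gives $\prod_{v \in S}\Phi(a_v(T)) \leq \Phi(\bar a(T))^{|S|}$, and a short computation identifies $\bar a(T) = (\gamma(T) - 2\alpha)/\sqrt{1-\delta}$ with $\gamma(T) := \bar V(T)/\sigma$ (the awkward $p(s_R - s_B)$ correction gets absorbed into the centering of $\bar V$).

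To average over $T$, note that $\gamma(T)$ is a scaled difference of two independent binomials counting internal edges of $S \cap R$ and $S \cap (V\setminus R)$; a direct variance computation gives $\mathrm{Var}(\gamma(T)) = \Theta(1/n)$, so by Chernoff, $\pr[\gamma(T) \geq \gamma_0] \leq e^{-n\gamma_0^2/2 + o(n)}$. Laplace's method then yields
\[
p_{R,S} \leq \mathrm{poly}(n) \cdot \left[\sup_{\gamma \in \mathbb{R}} e^{-\gamma^2/2}\,\Phi\left(\frac{\gamma - 2\alpha}{\sqrt{1-\delta}}\right)^{\delta}\right]^n.
\]
The supremum can be restricted to $\gamma \in [0,2]$: for $\gamma > 2$, $e^{-\gamma^2/2} < 1/4$, and for $\gamma < 0$, $\Phi$ is decreasing, tightening the bound. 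Combining with the $4^n$ union-bound overhead and the hypothesis $M \leq 1/4 - \varepsilon$ delivers failure probability $\leq \mathrm{poly}(n)\cdot (1 - 4\varepsilon)^n \leq q^n$ for some $q \in (0, 1)$. Upgrading to simultaneous bounds over all $\delta \in (\varepsilon, 1 - \varepsilon)$ is routine via monotonicity of $|\{v : Z_v \geq 0\}|$ and a $1/n$-grid discretization of $\delta$.

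The main technical obstacle will be in the Laplace/Gaussian-approximation step: the per-vertex Berry--Esseen error has to be small enough to survive being raised to the $\delta n$-th power and then interacted with the $e^{-n\gamma^2/2}$ tail. The required precision is supplied by the hypothesis $np(1-p) > C$ for $C$ sufficiently large. A subtler issue is that when $|S \cap R|$ differs substantially from $|S \setminus R|$, the shift $p(s_R - s_B)$ in the mean of each $U_v$ can be as large as $O(p\delta n)$, potentially dwarfing $\sigma$; thankfully it is exactly compensated by the corresponding shift in the mean of $\bar V(T)$, so the clean expression in the statement emerges unscathed. The concrete numerical check for $\varepsilon = 10^{-10}$, $\delta = 0.499$, $\alpha = 0.85$ then reduces to a one-variable optimization in $\gamma$.
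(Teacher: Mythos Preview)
Your plan is essentially the same as the paper's: union bound over pairs $(R,S)$, condition on the edges inside $S$ to decouple the events, collapse the resulting product to a power via log-concavity, and then control the internal edge count by a Bernstein-type tail bound, assembling everything into the displayed supremum over $\gamma$. The cancellation you note between the $p(s_R-s_B)$ shift in $\mathbb{E}[U_v]$ and the mean of $\bar V(T)$ is exactly the computation the paper carries out when evaluating $\mathbb{E}[M]$.

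The one substantive difference is the \emph{order} of log-concavity versus Gaussian approximation. You propose Berry--Esseen first (per vertex), then Jensen via log-concavity of $\Phi$; the paper instead uses log-concavity of the exact binomial CDF to collapse $\prod_{v\in S} \mathbb{P}(U_v \geq -V_v(T))$ to a single $|S|$-th power \emph{before} invoking the CLT once. This ordering is what makes your ``main technical obstacle'' evaporate: with the paper's order there is only one Gaussian approximation, applied to one probability, so there is no per-vertex additive $O(1/\sigma)$ error to survive a $\delta n$-th power. In your order that accumulation is a real issue---particularly when some $a_v(T)$ are very negative (so $\Phi(a_v)$ is tiny and the additive Berry--Esseen error dominates), or when $p$ is near the sparse end $np(1-p)\approx C$ (so $\sigma$ is merely a large constant and $(1+O(1/\sigma))^{\delta n}$ is genuinely $e^{\Theta(n)}$). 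These can likely be handled with enough care (splitting into regimes, using Chernoff instead of Berry--Esseen for extreme $a_v$, absorbing constants into $C=C(\varepsilon)$), but the paper's swap of the two steps is the cleaner route and removes the obstacle entirely.
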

For $\delta = 0.499$, a result such as the above is not difficult to prove for sufficiently large $\alpha$, and in fact versions of this have also appeared in \cite{benjamini2016,tran-vu}.  But we state the above so as to improve the $\Delta = 10$ case of $G_{n,1/2}$ as studied in Tran and Vu.  In that setting, the constants involved are frustratingly important, and a proof with $\alpha = 0.85$ becomes necessary.

\subsubsection*{Applications}
From Theorem \ref{good CLT}---and in fact merely from our variance estimate---we readily obtain:

\begin{corollary}\label{theorem:the main result}
There is a universal constant $C > 0$ such that the following holds.  Suppose we fix any initial red-blue partition $R_0$, $B_0$ of $n$ vertices where $|R_0| - |B_0| \geq \Delta > 0$, and we independently sample a random graph $G_{n,p}$.  Then for all real $t \geq 1$, we have
\[
\mathbb{P}\left(|R_1| \geq n \Phi\left( \dfrac{\Delta p}{\sqrt{np(1-p)}} \right) - \dfrac{nt}{\sqrt{np(1-p)}} \right) \geq 1 - \dfrac{Cp(1-p)}{t^2}.
\]
Moreover, if $5 \leq \Delta p$ and $\sqrt{np(1-p)}\geq 25$, then we have
\[
\mathbb{P}\Big(|R_1| \geq n/2 + \min\left(0.3n , 0.11 \Delta \sqrt{np/(1-p)} \right) \Big) \geq 1 - \dfrac{C}{\Delta}.
\]
\end{corollary}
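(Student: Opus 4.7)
The plan is to combine three ingredients: a linearity computation paired with a Berry--Esseen approximation for $\mathbb{E}|R_1|$, the variance bound from Theorem~\ref{good CLT}(i), and Chebyshev's inequality. Part (ii) will then follow from (i) by a judicious choice of $t$.

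For (i), I would expand $\mathbb{E}|R_1| = \sum_v \mathbb{P}(v \in R_1)$ by linearity; for each $v$, the summand equals either $\mathbb{P}(\mathrm{Bin}(|R_0|-1, p) \geq \mathrm{Bin}(|B_0|, p))$ or $\mathbb{P}(\mathrm{Bin}(|R_0|, p) > \mathrm{Bin}(|B_0|-1, p))$ depending on whether $v \in R_0$ or $v \in B_0$. The Berry--Esseen theorem applied to each such binomial difference (with mean $(\Delta \pm 1)p$ and variance $(n-1)p(1-p)$), together with the $1/\sqrt{2\pi}$-Lipschitz property of $\Phi$ (to absorb the $\pm 1$ shifts in the binomial parameters), shows both probabilities equal $\Phi(\Delta p/\sigma) + O(1/\sigma)$, so $\mathbb{E}|R_1| = n\Phi(\Delta p/\sigma) + O(n/\sigma)$. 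Theorem~\ref{good CLT}(i) bounds $\mathrm{Var}(|R_1|) \leq n/4 + O(\Delta + n/\sigma) = O(n)$ when $\sigma \geq 1$ (the regime $\sigma < 1$ renders the claim vacuous after enlarging $C$). Setting the threshold $T = n\Phi(\Delta p/\sigma) - nt/\sigma$, we have $\mathbb{E}|R_1| - T \geq nt/(2\sigma)$ once $t$ exceeds twice the implicit Berry--Esseen constant (smaller $t \geq 1$ are also absorbed into $C$), and Chebyshev gives $\mathbb{P}(|R_1| < T) \leq \mathrm{Var}(|R_1|)/(nt/(2\sigma))^2 = O(p(1-p))/t^2$, proving (i).

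For (ii), I would apply (i) with $t = \max\{1,\, c\sqrt{\Delta p(1-p)}\}$ for a small universal $c$ (say $c = 0.15$); this yields error probability $O(1/\Delta)$ in both the $t = 1$ regime (where $\Delta p(1-p) \leq 1/c^2$ controls $Cp(1-p) = O(1/(c^2\Delta))$) and the other regime (directly giving $C/(c^2\Delta)$). Writing $x = \Delta p/\sigma$ and using $\Delta\sqrt{np/(1-p)} = nx$ and $t/\sigma = \max(1/\sigma,\, c\sqrt{\Delta/n})$, the threshold I must verify becomes
\[
f(x) := \Phi(x) - \tfrac{1}{2} - \min(0.3,\, 0.11 x) \;\geq\; \max\!\left(\tfrac{1}{\sigma},\, c\sqrt{\tfrac{\Delta}{n}}\right).
\]
Since $f$ is concave on $[0,\infty)$ with $f(0) = 0$, $f(1) \approx 0.23$, and $f(x) \geq 0.197$ on $[1,\infty)$ (the minimum of $f$ on $[1, 2.73]$ occurring at $x = 2.73$), I would split into $x \geq 1$ and $x \in (0,1]$: in the former, $0.197 > 1/25 \geq 1/\sigma$ and $0.197 > c \geq c\sqrt{\Delta/n}$ whenever $c \leq 0.19$; in the latter, concavity yields $f(x) \geq 0.23 x = 0.23 \Delta p/\sigma$, which dominates $1/\sigma$ via $\Delta p \geq 5 > 1/0.23$ and dominates $c\sqrt{\Delta/n}$ after squaring (reducing to $0.053\, \Delta p/(1-p) \geq c^2$, true since $\Delta p \geq 5$).

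The \textbf{main obstacle} is the small-$x$ regime of (ii), where the slack $f(x)$ is only proportional to $x$: the numerical constants $0.11$ and $0.3$ in the statement must be separated enough from the slope $1/\sqrt{2\pi} \approx 0.399$ of $\Phi$ at the origin so that the margin absorbs both the Berry--Esseen error and the Chebyshev error, and the hypotheses $\Delta p \geq 5$ and $\sigma \geq 25$ are precisely what make this balance work.
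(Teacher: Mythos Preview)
Your proposal is correct and follows essentially the same route as the paper: for the first inequality you both compute $\mathbb{E}|R_1|$ via linearity and Berry--Esseen, invoke the variance bound $\mathrm{Var}(|R_1|)=\mathcal{O}(n)$ from Theorem~\ref{good CLT}(i), and finish with Chebyshev; for the second inequality you both plug in a suitable value of $t$ and verify the resulting threshold numerically using $\Delta p\geq 5$ and $\sigma\geq 25$.

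The only notable difference is organizational. The paper splits the derivation of the second inequality into two cases according to whether $\Delta p\le \sigma$ or $\Delta p\ge \sigma$, taking $t\approx 0.218\,\Delta p$ in the first case and $t=0.04\,\sigma$ in the second. You instead use a single uniform choice $t=\max\{1,\,c\sqrt{\Delta p(1-p)}\}$ and package the threshold verification as the inequality $f(x)\ge t/\sigma$ for $f(x)=\Phi(x)-\tfrac12-\min(0.3,\,0.11x)$, checking the two ranges $x\le 1$ and $x\ge 1$. This is a perfectly good (and arguably cleaner) way to arrange the same computation. One small imprecision: $f$ is not concave on all of $[0,\infty)$ (there is an upward kink at $x=0.3/0.11$), but you only actually use concavity on $[0,1]$, and on $[2.73,\infty)$ the function $f(x)=\Phi(x)-0.8$ is increasing, so your claim $f(x)\ge 0.197$ on $[1,\infty)$ is still correct.
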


The above gives us sufficient control of $|R_1|$ to immediately improve several results of Benjamini, Chan, O'Donnell, Tamuz, and Tan \cite{benjamini2016}, which we summarize below.

\begin{theorem}\label{theorem:improving Benjamini, ODonnell}
For each $\varepsilon > 0$, there is a $C > 0$ such that the following holds.  Let $G \sim G_{n,p}$, and initially assign each vertex one of two colors (independently uniformly at random).  Without loss of generality, suppose this assignment initially colors at least half of the vertices red.
\begin{itemize}
\item[(a)] If $p \geq C / \sqrt{n}$, then with probability at least $1- \varepsilon$, the majority dynamics process on $G$ will be unanimously red after at most four steps.
\item[(b)] If $p \geq C / n^{1/3}$, then with probability at least $1-\varepsilon$, the majority dynamics process on $G$ will be unanimously red after at most three steps.
\item[(c)] If $p \geq 1 - C^{-1}$, then with probability at least $1- \varepsilon$, the majority dynamics process on $G$ will be unanimously red after at most two steps.
\end{itemize}
\end{theorem}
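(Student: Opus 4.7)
The proof plan follows the two-phase template from the introduction: use Corollary \ref{theorem:the main result} to obtain a substantial lead for $|R_1|$, then use Proposition \ref{lemma:sex panther} (iteratively) combined with a final concentration argument to drive this lead to unanimity. The step counts $4$, $3$, $2$ match those of Benjamini--Chan--O'Donnell--Tamuz--Tan; the improvement to probability $1 - \varepsilon$ is inherited directly from the stronger probability bounds of Corollary \ref{theorem:the main result}.

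Since $R_0, B_0$ is a uniform random balanced coloring conditioned on $|R_0| \geq |B_0|$, standard binomial tail estimates give $\Delta := |R_0| - |B_0| \geq K\sqrt n$ with probability $\geq 1 - \varepsilon/3$ for a suitable $K = K(\varepsilon) > 0$; condition on this. Taking $C$ large enough in terms of $\varepsilon$ and $K$, the hypotheses $\Delta p \geq 5$ and $\sqrt{np(1-p)} \geq 25$ of Corollary \ref{theorem:the main result} are met in each regime, and the Corollary produces (with probability $\geq 1 - \varepsilon/3$ over the random graph)
\[
|R_1| - \tfrac{n}{2} \;\geq\; \min\bigl(0.3n,\; 0.11\, \Delta \sqrt{np/(1-p)}\bigr).
\]
A direct computation in each of regimes (a)--(c) shows that this lead is much larger than $\alpha\sqrt{n(1-p)/p}$ for any fixed $\alpha$ once $C$ is sufficiently large, so Proposition \ref{lemma:sex panther} can be invoked in the next stage with any desired $\alpha$ and any $\delta$ satisfying its supremum condition.

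In the propagation phase, I apply Proposition \ref{lemma:sex panther} with small $\delta$ and correspondingly large $\alpha$ to the set $R_1$, obtaining $|R_2| \geq (1 - \delta)n$. Since $|R_2| - n/2$ is now $\Omega(n)$, much larger than $\alpha' \sqrt{n(1-p)/p}$ for any $\alpha'$, further applications of the Proposition (on intersected good-graph events, each contributing only another $q^n$ to the failure probability) drive the blue set to be ever smaller. Once it is small enough, a direct Chernoff + union bound over vertices and possible small blue sets forces the next red set to equal all of $V$, using $np \to \infty$ in each regime. For regime (c), the density $p \geq 1 - 1/C$ is already so high that Stage 1 itself can be carried out by direct Chernoff concentration (each vertex sees nearly all of $V$ as neighbors and $\Delta p \gg \sqrt{np(1-p)}$), and the process is completed in two rounds.

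The main technical obstacle is the ``near-unanimity to unanimity'' step: because the blue set $B_k$ at time $k$ is a deterministic function of the random graph, a naive Chernoff bound for each vertex cannot be applied directly. The remedy is a union bound over small candidate blue sets, but this is quantitative---Proposition \ref{lemma:sex panther} must be iterated enough times (using successively smaller $\varepsilon$-parameters) that $H(|B_k|/n)\cdot n \ll np$, allowing the union bound to close. A secondary concern is the bookkeeping of nested constants $\alpha, \delta, \varepsilon$ and the universal $C$ of the theorem, which must be arranged so that every choice remains finite in $n$ while matching the prescribed step count in each regime.
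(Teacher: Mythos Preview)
Your Stage 1 is essentially the paper's: condition on $\Delta \geq \gamma\sqrt{n}$ and apply Corollary \ref{theorem:the main result} to obtain a lead of order $\Delta\sqrt{np/(1-p)}$ after one step. That part is fine.

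The gap is in your propagation phase. Proposition \ref{lemma:sex panther} is the wrong tool here: as stated it requires $\delta \in (\varepsilon, 1-\varepsilon)$ for a \emph{fixed} $\varepsilon > 0$, so no matter how large $\alpha$ is, a single application can only shrink the minority to $\varepsilon n$ --- a constant fraction of $n$. Iterating a bounded number of times (you have at most two propagation steps available in regime (a)) with successively smaller but still fixed $\varepsilon$-parameters still leaves $|B_k| \geq c n$ for some $c > 0$. Your closing union bound over candidate blue sets then has $\binom{n}{cn} \approx e^{H(c)n}$ terms against a Chernoff factor of $e^{-\Theta(np)}$; in regime (a) where $np$ can be $\Theta(\sqrt{n})$, this union bound diverges rather than closes. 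So the argument as written cannot reach unanimity in the prescribed number of steps for (a) or (b).

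The paper does not use Proposition \ref{lemma:sex panther} at all in this proof. Instead, for propagation it invokes the lemma of Benjamini--Chan--O'Donnell--Tamuz--Tan (stated in the paper as Lemma \ref{benjamini lemma}): once $|R_k| - |B_k| \geq \alpha n$, one step drives $|B_{k+1}| \leq 32/(\alpha^2 p)$. This gives an $O(1/p)$ minority after a single step from a linear lead, and one more application (plus the minimum-degree event $F$) gets to unanimity within the claimed step counts. Proposition \ref{lemma:sex panther} is reserved for the proof of Theorem \ref{theorem:lead of 3 will win}, where only a constant-fraction reduction is needed and the precise $\alpha = 0.85$ matters.
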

A proof of part (a) was the main theorem of \cite{whpFolks}, but results (b) and (c) are new.  Parts (a) and (b) directly generalize Theorems 2 and 3.9 (respectively) of \cite{benjamini2016}, where these events were only proven to hold with probability at least 0.4.  That said, essentially the only difference in our proof is that we are able to use Corollary \ref{theorem:the main result}, which strengthens a weaker result of their paper (namely Theorem 3.5 of \cite{benjamini2016}).  Part (c) of our result also follows in this same way, and we include it here mostly for completeness.

As a second application, the following improves upon a recent paper of Tran and Vu \cite{tran-vu}.

\begin{theorem}\label{theorem:lead of 3 will win}
Suppose we have a red-blue coloring of a vertex set $V = R_0 \cup B_0$ such that there are initially at least $\Delta >0$ more red vertices than blue.  We then independently sample a random graph $G_{|V|,1/2}$.  For $\lambda \in \{\red, \blue\}$, let $P(\lambda)$ denote the probability that the majority dynamics process on $G$ eventually results in the vertices all being colored $\lambda$.  Then
\[
P(\red) - P(\blue) \geq -o(1) + \dfrac{2}{\sqrt{2\pi}} \int_{0} ^{2(\Delta/\sqrt{2\pi} - 0.85)} e^{-x^2 /2} dx = 2 \Phi \left(2(\Delta/\sqrt{2\pi} - 0.85) \right) - 1 - o(1).
\]
In particular, if $\Delta \geq 3$, and $|V|$ is sufficiently large, then we have $P(\red) - P(\blue) \geq 0.511$.
\end{theorem}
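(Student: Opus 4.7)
The plan is to combine Theorem \ref{good CLT}(ii) with Proposition \ref{lemma:sex panther}, in the style of Tran and Vu. The CLT will give an asymptotic normal distribution for $|R_1|$, the Proposition will ensure that an $0.85\sqrt{n}$ lead after one step propagates to unanimity, and together these produce the claimed lower bound on $P(\red) - P(\blue)$.

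With $p = 1/2$ and $\Delta$ fixed we have $\sigma = \sqrt{n}/2 \to \infty$ and $\Delta p = \Delta/2 = \mathcal{O}(\sigma)$, so Theorem \ref{good CLT}(ii) applies. Since $\mathrm{Bin}(|R_0|, 1/2) - \mathrm{Bin}(|B_0|, 1/2)$ has mean $\Delta/2$ and standard deviation $\sim \sqrt{n}/2$, the CLT yields $\mu \to 1/4$, hence $\sqrt{n\mu} \sim \sqrt{n}/2$. A parallel computation (via the CLT with continuity correction) shows that $\mathbb{P}(v \in R_1 \mid v \in R_0)$ and $\mathbb{P}(v \in R_1 \mid v \in B_0)$ both equal $\Phi(\Delta/\sqrt{n}) + o(n^{-1/2})$, so that $\mathbb{E}|R_1| = n\Phi(\Delta/\sqrt{n}) + o(\sqrt{n}) = n/2 + \Delta\sqrt{n}/\sqrt{2\pi} + o(\sqrt{n})$ by Taylor expansion of $\Phi$ about $0$. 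Combined with Theorem \ref{good CLT}(ii), the rescaled variable $(|R_1| - n/2)/(\sqrt{n}/2)$ converges in distribution to $\mathcal{N}(2\Delta/\sqrt{2\pi}, 1)$, and in particular
\[
\mathbb{P}\bigl(|R_1| \geq n/2 + 0.85\sqrt{n}\bigr) \to 1 - \Phi\bigl(1.7 - 2\Delta/\sqrt{2\pi}\bigr) = \Phi\bigl(2(\Delta/\sqrt{2\pi} - 0.85)\bigr).
\]

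Next, apply Proposition \ref{lemma:sex panther} with $\varepsilon = 10^{-10}$, $p = 1/2$, $\alpha = 0.85$, $\delta = 0.499$: there is an event $\mathcal{E}$ on the graph $G$ with $\mathbb{P}(\mathcal{E}^c) \leq q^n = o(1)$ such that on $\mathcal{E}$, any red set $R$ with $|R| \geq n/2 + 0.85\sqrt{n}$ produces a next-step red set of size at least $0.501 n$. Since $\mathcal{E}$ is a property of $G$ alone and the Proposition is quantified over all $R$, we may re-apply it on this same event with $\alpha = \Omega(\sqrt{n})$ and $\delta$ arbitrarily small, iteratively driving the blue set down to at most $\varepsilon n$; a standard expansion/union-bound argument as in \cite{tran-vu} then finishes the last stretch to $|R_k| = n$. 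Thus, on $\mathcal{E}$, the event $\{|R_1| \geq n/2 + 0.85\sqrt{n}\}$ forces the process to unanimity red, giving
\[
P(\red) \geq \mathbb{P}\bigl(|R_1| \geq n/2 + 0.85\sqrt{n}\bigr) - \mathbb{P}(\mathcal{E}^c) \geq \Phi\bigl(2(\Delta/\sqrt{2\pi} - 0.85)\bigr) - o(1).
\]
Because unanimity red and unanimity blue are disjoint events, $P(\blue) \leq 1 - P(\red)$, so
\[
P(\red) - P(\blue) \geq 2P(\red) - 1 \geq 2\Phi\bigl(2(\Delta/\sqrt{2\pi} - 0.85)\bigr) - 1 - o(1).
\]
For $\Delta = 3$ this evaluates numerically to $2\Phi(0.694\ldots) - 1 \approx 0.511$.

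The principal technical obstacle is the precise asymptotic computation of $\mathbb{E}|R_1|$ (and the verification $\mu \to 1/4$): because the final numerical bound for $\Delta = 3$ is only $0.511$, the argument is sensitive to the leading constant $1/\sqrt{2\pi}$ coming from $\Phi'(0)$, and one must ensure the continuity-correction error in approximating $\mathbb{P}(v \in R_1 \mid v \in R_0)$ and $\mathbb{P}(v \in R_1 \mid v \in B_0)$ is genuinely $o(n^{-1/2})$ so that the two contributions to $\mathbb{E}|R_1|$ line up correctly. The iteration from a $\sqrt{n}$-scale lead to full unanimity via Proposition \ref{lemma:sex panther} is, by contrast, essentially routine.
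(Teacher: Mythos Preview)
Your argument is correct and reaches the same bound, but by a more direct route than the paper's. The paper introduces a coupling in which a set $X$ of $\Delta$ vertices is colored either all red or all blue, proves a separate concentration lemma (Lemma~\ref{lemma:sway over some vertices}) to pin down $|R_1(\red)| - |R_1(\blue)| \approx 2\Delta\sqrt{n/(2\pi)}$, and then lower-bounds $P(\red)-P(\blue)$ by the probability of the joint ``swing'' event $\{W(\red)=\red,\ W(\blue)=\blue\}$. You bypass the coupling and Lemma~\ref{lemma:sway over some vertices} entirely: you bound $P(\red)$ directly from the CLT and Proposition~\ref{lemma:sex panther}, then use the trivial inequality $P(\blue)\le 1-P(\red)$, and this already yields $2\Phi(2(\Delta/\sqrt{2\pi}-0.85))-1$. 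The trade-off is that your route puts all the weight on computing $\mathbb{E}|R_1|$ to precision $o(\sqrt{n})$, which you rightly flag as the crux. For $p=1/2$ this is clean: since $1-\mathrm{Ber}(1/2)\stackrel{d}{=}\mathrm{Ber}(1/2)$, one checks that $\mathbb{P}(v\in R_1\mid v\in R_0)=\mathbb{P}(v\in R_1\mid v\in B_0)=\mathbb{P}(\mathrm{Bin}(n-1,1/2)\ge |B_0|)$ \emph{exactly}, and then symmetry of the Rademacher sum plus the local CLT gives $\mathbb{E}|R_1|-n/2=\tfrac{n}{2}\mathbb{P}(|S|\le\Delta-1)=\Delta\sqrt{n/(2\pi)}+O(n^{-1/2})$ with no parity issues (since $n\equiv\Delta\bmod 2$). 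In the paper, this same computation is effectively hidden inside Lemma~\ref{lemma:sway over some vertices} together with the symmetry $\mathbb{E}[Z_0]=(n-\Delta)/2$. One small omission: you assume $\Delta$ fixed, whereas the statement allows $\Delta$ to vary with $n$; as the paper does, invoke monotonicity of majority dynamics to reduce to bounded $\Delta$ first.
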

Interpreting this in a somewhat playful light, the above result argues that if majority dynamics represents the propagation of opinions in a social network, then one side can obtain a noticeable advantage merely by bribing three extra vertices.  This improves on a result of Tran and Vu \cite{tran-vu} who argued a similar claim but with an initial lead of at least $10$.

The bottleneck in our argument is our reliance on Proposition \ref{lemma:sex panther}, for which the value of $\alpha$ surprisingly becomes important.  We make no claim that our constant $\alpha$ is especially close to the best possible, and sufficiently improving this constant would immediately prove a version of Theorem \ref{theorem:lead of 3 will win} for an initial lead of $1$ vertex.  However, it is easy to imagine that such incremental refinements might be somewhat unsatisfactory, and we instead propose the following, which gets to the heart of the matter.

\begin{conjecture}\label{conjecture:one always suffices}
For each fixed $p \in (0,1)$, there is a corresponding value of $\varepsilon > 0$ such that if there are initially more red vertices than blue, then with probability at least $0.5 + \varepsilon - o(1)$, the majority dynamics process on $G_{n,p}$ will result in all vertices being red.
\end{conjecture}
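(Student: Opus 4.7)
The plan is to follow the strategy used for Theorem~\ref{theorem:lead of 3 will win}, combining Theorem~\ref{good CLT} with a strengthened version of Proposition~\ref{lemma:sex panther}. Applying the CLT to $|R_1|$ gives an asymptotically Gaussian distribution with mean approximately $n/2 + \Delta\sqrt{np/(1-p)}/\sqrt{2\pi}$ and variance $n\mu \sim n/4$; the probability that $|R_1|$ exceeds $n/2 + \alpha\sqrt{n(1-p)/p}$ then converges to $\Phi\bigl(2\Delta\sqrt{p/(1-p)}/\sqrt{2\pi} - 2\alpha\sqrt{(1-p)/p}\bigr)$, which exceeds $1/2$ by a fixed positive gap precisely when $\alpha < \Delta p/((1-p)\sqrt{2\pi})$. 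For any $p \in (0,1)$ and any initial lead $\Delta \geq 1$, the conjecture thus reduces to finding a value of $\alpha$ in Proposition~\ref{lemma:sex panther} satisfying this bound---for $p = 1/2$ with $\Delta = 1$, this means improving $\alpha$ from the current $0.85$ down to anything below $1/\sqrt{2\pi} \approx 0.399$.

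The first thing I would try is a direct sharpening of Proposition~\ref{lemma:sex panther}. Its proof rests on a uniform expansion estimate via a union bound over sets $R$ of size at least $n/2 + \alpha\sqrt{n(1-p)/p}$; the per-vertex probability of being ``bad'' with respect to such an $R$ is approximately $\Phi(-2\alpha)$, while the entropy cost of the union bound is $\binom{n}{|R|} \approx 2^{n(1-o(1))}$. As $\alpha$ shrinks, $\Phi(-2\alpha) \to 1/2$ and the crude union bound breaks down. To close this gap, one might restrict the union bound to the pseudo-random family of sets actually realizable as $R_1$, or replace the set-by-set bound with a chaining or second-moment argument that exploits the heavy overlap structure among nearby sets.

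A potentially more tractable alternative is to abandon single-step expansion and instead show that the lead amplifies over a bounded number of steps. If $|R_1| = n/2 + k\sqrt{n}$ for any fixed $k > 0$, then heuristically for each vertex $v$ we have $\Pr(v \in R_2) \approx \Phi(2k\sqrt{p/(1-p)})$, pushing $\mathbb{E}[|R_2|]$ up to $n/2 + \Theta(n)$, so a $\sqrt{n}$-scale lead at time $1$ should amplify to a macroscopic lead at time $2$, after which Proposition~\ref{lemma:sex panther} as stated can be invoked. The main obstacle is that conditional on $|R_1|$ the graph $G_{n,p}$ is no longer a fresh sample, so neither Theorem~\ref{good CLT} nor a naive concentration argument gives the required estimate on $|R_2|$; rigorously controlling this dependence---likely by extending the $p$-biased Fourier framework underlying Theorem~\ref{good CLT} to the joint moments of $(|R_1|,|R_2|)$---is where I expect the bulk of the work to lie.
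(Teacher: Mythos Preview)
The statement you are addressing is a \emph{conjecture}, not a theorem: the paper explicitly leaves it open and writes that ``this conjecture, if true, would likely require a new approach'' and that the authors ``are unable to prove either even when $p$ is constant.'' There is therefore no proof in the paper to compare your proposal against.

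Your write-up is not a proof either, and you seem to be aware of this---it is a correct diagnosis of where the obstruction lies, not a resolution of it. Your first paragraph is an accurate computation: the CLT of Theorem~\ref{good CLT} gives $|R_1|$ approximately Gaussian with standard deviation $\sim\sqrt{n}/2$, and the reduction to requiring $\alpha < \Delta p/((1-p)\sqrt{2\pi})$ in Proposition~\ref{lemma:sex panther} is exactly right. The paper makes the same observation just before stating the conjecture (``sufficiently improving this constant would immediately prove a version of Theorem~\ref{theorem:lead of 3 will win} for an initial lead of $1$ vertex''). Your second paragraph correctly identifies why the union-bound proof of Proposition~\ref{lemma:sex panther} cannot reach such small $\alpha$: as $\alpha\to 0$ the per-vertex failure probability tends to $1/2$ and cannot beat the $2^{n}$ entropy of the union bound. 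Your third paragraph is essentially the heuristic the paper records in the paragraph following Conjecture~8 (``if we were to resample the random graph at this point, then such an advantage would be nearly impossible to squander''), and you have put your finger on the genuine difficulty: after conditioning on $|R_1|$ the edges of $G_{n,p}$ are no longer independent, so neither the CLT nor the expansion lemma applies to step~2.

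In short, your proposal accurately maps the landscape and matches the paper's own assessment of the problem, but neither you nor the paper has a proof; the two concrete suggestions you make (restricting the union bound to realizable sets $R_1$, or extending the Fourier framework to joint moments of $(|R_1|,|R_2|)$) are reasonable research directions, not arguments.
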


This conjecture, if true, would likely require a new approach, and we also believe the following stronger conjecture is true (which, in light of our central limit theorem, would exactly determine the value of $\varepsilon$ in Conjecture \ref{conjecture:one always suffices}).

\begin{conjecture}
Given any initial coloring of vertices and any probability $p \geq (1+\varepsilon)\log(n)/n$, if we sample $G_{n,p}$ (independently of the initial coloring) then with probability tending to 1, whichever color has more vertices after the first step of the majority dynamics process will have increasingly more vertices after each subsequent step until this is the unanimous color.
\end{conjecture}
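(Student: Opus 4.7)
The plan is to reduce the conjecture to an iterated application of Proposition \ref{lemma:sex panther}. That proposition gives a pathwise guarantee: once the red lead exceeds the threshold $\alpha\sqrt{n(1-p)/p}$, every subsequent step of majority dynamics strictly increases the number of red vertices, all the way to unanimity. It therefore suffices to show that, with probability tending to $1$, within a bounded number of steps after step $1$ the lead reaches this threshold and is non-decreasing along the way.

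I would split into cases based on the initial imbalance $\Delta = \big||R_0|-|B_0|\big|$. When $\Delta$ is large relative to $\sqrt{n(1-p)/p}/p$, Theorem \ref{good CLT} together with Corollary \ref{theorem:the main result} show that $|R_1|$ already exceeds $n/2 + \alpha\sqrt{n(1-p)/p}$ with probability $1-o(1)$, and Proposition \ref{lemma:sex panther} then closes the argument. The substantial case is when $\Delta$ is small (bounded, or at most polylogarithmic in $n$), so that the lead after step $1$ is only of order $\sqrt{n\mu}$: this is easily positive, but potentially much smaller than $\alpha\sqrt{n(1-p)/p}$ when $p$ is near the connectivity threshold.

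In this regime, the plan is to extend the Fourier/moment technology behind Theorem \ref{good CLT} to subsequent iterations, aiming to show that whenever $|R_k|-n/2 = \eta_k>0$ falls below the threshold of Proposition \ref{lemma:sex panther}, one has $\eta_{k+1} \geq (1+c(p))\,\eta_k$ with probability $1-o(1)$, for some $c(p)>0$. Iterating this multiplicative amplification for $O(\log n)$ rounds would lift the lead above the threshold, after which Proposition \ref{lemma:sex panther} takes over. A union bound over these few initial steps, together with the concentration of $|R_1|$ from Theorem \ref{good CLT}, would then deliver the desired pathwise monotonicity up to unanimity.

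The main obstacle is that after step $1$ the coloring $R_k$ becomes an extremely intricate function of the edges of $G_{n,p}$: the simple decomposition of $|R_1|$ as a sum of local indicators is lost, and the $p$-biased Fourier expansion used here does not extend in any straightforward way. Worse, the conjecture asks for a \emph{pathwise} monotonicity statement---$|R_{k+1}| > |R_k|$ for every $k$ simultaneously---which is strictly stronger than in-expectation growth and seems to require controlling the joint coupling of consecutive iterates rather than just their marginal distributions. Overcoming this, especially for $p$ as low as $(1+\varepsilon)(\log n)/n$, is why the statement is left as a conjecture rather than a theorem.
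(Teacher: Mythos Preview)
This statement is a \emph{conjecture}; the paper contains no proof of it, and in fact explicitly says ``we are unable to prove either even when $p$ is constant.'' So there is nothing in the paper to compare your attempt to. What you have written is not a proof either: it is a sketch of a strategy together with an honest acknowledgment that the strategy cannot currently be completed. That final paragraph is the accurate part---the obstacles you name (the coloring $R_k$ for $k\geq 2$ is a highly intricate function of the edges, the Fourier machinery does not extend, and pathwise monotonicity is stronger than in-expectation growth) are exactly why the paper leaves this open. Your discussion is in the same spirit as the authors' heuristic remark that if one were allowed to \emph{resample} the graph after step $1$ the lead would almost surely grow, but that making this rigorous without resampling is the crux.

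A couple of points where your sketch overstates what the cited results give. Proposition \ref{lemma:sex panther} does not by itself furnish a pathwise guarantee ``all the way to unanimity'': with $\delta=0.499$ it only says that a lead of $0.85\sqrt{n(1-p)/p}$ produces a lead of at least $0.001n$ after one more step; reaching unanimity in the paper uses additional input (Lemma \ref{benjamini lemma} from \cite{benjamini2016} and the minimum-degree event $F$). Also, your ``large $\Delta$'' case via Corollary \ref{theorem:the main result} is only available when $\sqrt{np(1-p)}\geq 25$ and $\Delta p\geq 5$; at $p=(1+\varepsilon)\log(n)/n$ this forces $\Delta$ to be of order $n/\log n$, so the ``small $\Delta$'' regime you flag as substantial is essentially the entire range. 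Finally, the proposed $O(\log n)$-round amplification would need per-step failure probability $o(1/\log n)$, not merely $o(1)$, for the union bound to close---another reason the argument does not go through as stated.
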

Simulations suggest that both conjectures are true, but we are unable to prove either even when $p$ is constant.  One heuristic argument is that after the first step, one side is likely to have a relatively extreme advantage (e.g., in the case of $p$ constant, after one step one side will be winning by $\Omega(\sqrt{n})$).  If we were to resample the random graph at this point, then such an advantage would be nearly impossible to squander.  We aren't sure how to make this argument rigorous, but we think a resolution of this either way would be very interesting.

\subsection*{Outline of paper}
We begin in Section \ref{sect:lemmas} with a brief collection of known technical results.  In Section \ref{sect:good CLT proof}, we present our Fourier-analytic proof of our main result, Theorem \ref{good CLT}.  We continue in Section \ref{sect:prop and cor} with Proposition \ref{lemma:sex panther} and Theorem \ref{theorem:the main result}.  We then show quick applications in Section \ref{sect:applications} by deriving Theorems \ref{theorem:improving Benjamini, ODonnell} and \ref{theorem:lead of 3 will win}.  Finally, for the sake of clarity, we prove several lemmas in an Appendix.

\section{Known technical lemmas}\label{sect:lemmas}
We will need the following.  Throughout, we let $Bin(m,p)$ denote a binomial random variable with mean $mp$ and variance $mp(1-p)$.

\medskip

\noindent \textbf{Bernstein's inequality:} Let $X_1, X_2, \ldots, X_n$ be independent random variables with $a_i \leq X_i \leq a_i + M$ for all $i$.  Then for all $t \geq 0$, we have
\[
\mathbb{P}\left( \left| \sum_{i=1} ^{n} \left(X_i - \mathbb{E}X_i \right) \right| > t \right) \leq 2 \exp \left( \dfrac{-2 t^2/2}{\sum_{i=1} ^{n} \text{Var}(X_i) + Mt/3} \right).
\]

\begin{lemma}\label{lemma:stupid result} Let $W = Bin(n, p) - Bin(m,p)$ with $p \notin \{0,1\}$.  There is a universal constant $C$ (independent of $m$, $n$, and $p$) such that the following holds.
\begin{itemize}
\item[(i)] For all $t$, we have $\mathbb{P}(W=t) \leq \dfrac{C}{\sqrt{(m+n)p(1-p)}}$.
\item[(ii)] For all $t$, we have $|\mathbb{P}(W=t+1) - \mathbb{P}(W=t)| \leq \dfrac{C}{(m+n)p(1-p)}$.
\end{itemize}
\end{lemma}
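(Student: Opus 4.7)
\medskip

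The plan is to prove both parts simultaneously via Fourier inversion applied to the characteristic function of $W$. Writing $\phi_W(\theta) = \mathbb{E}[e^{i \theta W}]$, since $W$ is integer-valued we have the exact formula
\[
\mathbb{P}(W=t) = \frac{1}{2\pi} \int_{-\pi}^{\pi} e^{-i t \theta} \phi_W(\theta) \, d\theta,
\]
and a direct calculation using independence of $Bin(n,p)$ and $Bin(m,p)$ gives
\[
|\phi_W(\theta)|^2 = \bigl[(1-p) + p e^{i\theta}\bigr]^n \bigl[(1-p) + p e^{-i\theta}\bigr]^n \cdot \text{(same with $m$)} = \bigl(1 - 2p(1-p)(1 - \cos\theta)\bigr)^{n+m}.
\]

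\medskip

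Next I would use the elementary bound $1 - \cos\theta \geq 2\theta^2/\pi^2$ valid on $|\theta| \leq \pi$, together with $1 - x \leq e^{-x}$, to obtain
\[
|\phi_W(\theta)| \leq \exp\!\left(-\frac{2 p(1-p)(n+m)}{\pi^2} \, \theta^2 \right) \qquad \text{for } |\theta| \leq \pi.
\]
Plugging this Gaussian envelope into the inversion formula and bounding $|e^{-it\theta}| \leq 1$ immediately yields $\mathbb{P}(W=t) \leq \frac{1}{2\pi}\int_{-\infty}^{\infty} e^{-c(n+m)p(1-p)\theta^2} d\theta = O\!\bigl(1/\sqrt{(n+m)p(1-p)}\bigr)$, proving (i).

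\medskip

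For (ii), I would apply the same Fourier inversion to the difference, giving
\[
\mathbb{P}(W=t+1) - \mathbb{P}(W=t) = \frac{1}{2\pi}\int_{-\pi}^{\pi} e^{-it\theta}\bigl(e^{-i\theta} - 1\bigr)\phi_W(\theta)\, d\theta.
\]
Now use $|e^{-i\theta} - 1| \leq |\theta|$ together with the same Gaussian bound on $|\phi_W(\theta)|$; the integral $\int_{-\infty}^{\infty} |\theta| e^{-c(n+m)p(1-p) \theta^2} d\theta$ evaluates to $O\!\bigl(1/((n+m) p(1-p))\bigr)$, which is exactly the bound required.

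\medskip

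I do not expect real obstacles here: both parts reduce, after the explicit computation of $|\phi_W|$, to standard Gaussian-tail estimates, and the regime where $(n+m)p(1-p)$ is $O(1)$ is automatic since the asserted inequalities become trivial once their right-hand sides exceed $1$. The only mildly delicate step is the cosine-inequality $1 - \cos\theta \geq 2\theta^2/\pi^2$ and the clean separation of the modulus of $\phi_W$ from oscillatory phases; neither requires any tools beyond what is used in a textbook local central limit theorem.
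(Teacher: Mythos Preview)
Your argument is correct and complete. The computation of $|\phi_W(\theta)|^2 = (1 - 2p(1-p)(1-\cos\theta))^{n+m}$ is right, the cosine inequality and the Gaussian envelope are valid on all of $[-\pi,\pi]$, and the two integrals give exactly the claimed powers of $(n+m)p(1-p)$. Your remark that the small-variance regime is trivial because the right-hand sides then exceed $1$ is also fine.

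The paper, however, proceeds quite differently. It first reduces to a single binomial by conditioning on whichever of $Bin(n,p)$, $Bin(m,p)$ has smaller variance, then quotes the standard point-mass bound for (i), and for (ii) analyzes the discrete second difference of $t \mapsto \mathbb{P}(W=t)$ to locate where $|\mathbb{P}(W=t+1)-\mathbb{P}(W=t)|$ is maximized and bounds it there via the ratio $\mathbb{P}(W=t+1)/\mathbb{P}(W=t) = \frac{(n-t)p}{(t+1)(1-p)}$. Your characteristic-function route is more uniform---both parts fall out of the same Gaussian envelope, with (ii) just picking up an extra factor of $|\theta|$---and it avoids the reduction step and the case analysis on $t$. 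The paper's argument is more elementary in that it uses no Fourier machinery, but it is also more ad hoc, especially in part (ii). Either approach is perfectly adequate for how the lemma is used downstream.
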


\begin{proof}
For each of these claims, we may assume that $m = 0$.  To see this, we would first condition on the value of either $Bin(n,p)$ or $Bin(m,p)$ (whichever has smaller variance), and then the claim reduces to that of a single binomial (with a different value of $t$ and $C$).

When $W = Bin(n,p)$, the first claim is a particularly well-known anticoncentration result.  As for the second claim, consider $F(t) = \mathbb{P}(W=t+1) - \mathbb{P}(W=t)$.  Then by routine manipulation, we see $F(t) - F(t-1)$ is positive iff $|t + 1/2 - p(n+1)| < \sqrt{1 + 4p(1-p)(n+1)}/2$.  Thus, we need only verify the claim for values of $t$ with $|t-np| \leq 2 \sqrt{np(1-p)}$.  For these, we have
\[
F(t) = \mathbb{P}(W=t) \left[ -1 + \dfrac{\mathbb{P}(W=t+1)}{\mathbb{P}(W=t)} \right] = \mathbb{P}(W=t) \left[ -1 + \dfrac{(n-t)p}{(t+1)(1-p)} \right].
\]
We bound $|\mathbb{P}(W=t)| \leq C/\sqrt{np(1-p)}$ by (i), and for $t$ in the given range, we readily see that the second term is also at most $C/\sqrt{np(1-p)}$.
\end{proof}

\section{Proof of Theorem \ref{good CLT}}\label{sect:good CLT proof}
For each $v \in V$, define
\[
\mu_{v} = \begin{cases}2\mathbb{P}\Big(Bin(|R_0|-1,p) \geq Bin(|B_0|,p) \Big)-1 \qquad &\text{if $v \in R_0$},\\2\mathbb{P}\Big(Bin(|B_0|-1,p) \geq Bin(|R_0|,p) \Big)-1 \qquad &\text{if $v \in B_0$},
\end{cases}
\]
and
\[
\varepsilon(v) = \begin{cases}1, \qquad &\text{if $v \in R_0$,}\\-1, \qquad &\text{if $v \in B_0$}. \end{cases}
\]
We will use the notation that $R_1$ (resp.\ $B_1$) denotes the set of red (resp.\ blue) vertices after one step of majority dynamics.  Then let $Z_v$ denote the following (centered) indicator
\[
Z_v = \begin{cases}1-\mu_v \qquad &\text{if $v \in R_0 \cap R_1$ or $v \in B_0 \cap B_1$},\\-1-\mu_v \qquad &\text{otherwise},
\end{cases}
\]
so that by our choice of $\mu_v$, we have $\mathbb{E}[Z_v] = 0$.  Finally, define $Z$ as
\[
Z = \sum_{r \in R_0} Z_r - \sum_{b \in B_0} Z_b = \sum_{v \in V} \varepsilon(v) Z_v = 2|R_1| - n - \mu_r |R_0| + \mu_b |B_0|.
\]
Since $Z$ is an affine transformation of $|R_1|$, we need only prove a central limit theorem for $Z$.  In fact, for each fixed $k$, we will prove
\begin{equation}\label{moments of Z}
\mathbb{E}\left[Z^k \right] = \begin{cases}\mathcal{O}\Big(n^{k/2}(1/\sigma + \Delta/n)\Big) \qquad &\text{if $k$ is odd}\\
\mathcal{O}\Big(n^{k/2}(1/\sigma + \Delta/n)\Big) + (k-1)!! (4n \mu)^{k/2} \qquad &\text{if $k$ is even,}
\end{cases}
\end{equation}
where the implied constants depend only on $k$.

\subsubsection*{Derivation of the theorem statement assuming \eqref{moments of Z}}
Since $\mathbb{E}[Z] = 0$, establishing \eqref{moments of Z} would immediately prove statement (i) of the theorem.  As for statement (ii), suppose $\log(1/\mu) = o(\log(\sigma))$ and $\sigma \to \infty$.  By Bernstein's inequality, we have
\[
\mu \leq \mathbb{P}\Big(|Bin(|R_0|,p) - Bin(|B_0|,p) - \Delta p| \geq \Delta p \Big) \leq 2\exp \left[ \dfrac{- (\Delta p)^2 / 2}{np(1-p) + \Delta p /3} \right].
\]
Since $\sigma \to \infty$, and $\log(1/\mu) = o(\log(\sigma))$, this would imply $\Delta p \ll \sigma \sqrt{\log(\sigma)}$.  Thus, we would need to have $\log(\sigma) = \mathcal{O}( \log(n/\Delta))$.  Therefore, $\eqref{moments of Z}$ would imply that for all fixed $k$
\[
\dfrac{\mathbb{E} \left[ Z ^k \right]}{(4n\mu) ^{k/2}} = \begin{cases}\mathcal{O}\Big((1/\mu^{k/2})(1/\sigma + \Delta/n)\Big) \qquad &\text{if $k$ is odd}\\
\mathcal{O}\Big((1/\mu^{k/2})(1/\sigma + \Delta/n)\Big) + (k-1)!! \qquad &\text{if $k$ is even.}
\end{cases}
\]
And since $\log(1/\mu) = o(\log(\sigma))$ and $\log(1/\mu) = o(\log(n/\Delta))$, these error terms converge to zero for any fixed $k$.  Thus, the moments of $Z/\sqrt{4n\mu}$ converge to those of the standard Gaussian, which establishes a central limit theorem by the method of moments.

Finally, we need to argue that if $\Delta p = \mathcal{O}(\sigma)$ and $\sigma \to \infty$, then the hypotheses of (ii) hold.  For this, we simply note if $\Delta p = \mathcal{O}(\sigma)$, then $\mu$ is bounded below,\footnote{This is by an immediate application of the usual central limit theorem for the binomial random variables in the definition of $\mu$.} and $\log(1/\mu) = \mathcal{O}(1)$.

\subsubsection*{Set up for establishing \eqref{moments of Z}}
Our proof of \eqref{moments of Z} is via standard techniques of ($p$-biased) Fourier analysis.  We first give a brief review and relevant set up, but for more, we refer the reader to O'Donnell's helpful text \cite{booleanFunctions}.

Let $\mathcal{E} = {{V \choose 2}}$ and consider the space of functions mapping from $\{-1,1\}^\mathcal{E} \to \mathbb{R}$.  Each input string is indexed by the $2$-element subsets of $V$, and each $\vec{x} \in \{-1,1\}^\mathcal{E}$ corresponds to a graph whose edge set is $\{e \ : \ x_e = 1\}$.  Consider the product measure on $\{-1,1\}^\mathcal{E}$ where each bit is independently equal to $1$ with probability $p$.  With this, we identify random variables depending on $G \sim G_{|V|,p}$ and functions $f: \{-1,1\}^\mathcal{E} \to \mathbb{R}$ in the obvious way.

For each $S \subseteq \mathcal{E}$, we define the function $\Phi_{S} : \{-1,1\}^\mathcal{E} \to \mathbb{R}$ via
\[
\Phi_{S} (\vec{x}) = \prod_{e \in S} \dfrac{x_e +1 - 2p}{2 \sqrt{p(1-p)}},
\]
with $\Phi_{\emptyset} = 1$.  Then it can be checked that for all $S, T$ we have
\[
\mathbb{E}[\Phi_{S}(\vec{x}) \Phi_{T}(\vec{x})] = \begin{cases}1 \qquad &\text{if $S=T$,}\\
0 \qquad &\text{otherwise.}\end{cases}
\]
These $\Phi_S$ therefore form an orthogonal basis for this space (with inner product $\langle f, g \rangle = \mathbb{E}[fg]$), and each $f$ can uniquely be written as
\[
f(\vec{x}) = \sum_{S \subseteq \mathcal{E}} \widehat{f}(S) \Phi_{S} (\vec{x}), \qquad \text{ where } \qquad \widehat{f}(S) = \mathbb{E}[f(\vec{x}) \Phi_{S}(\vec{x})].
\]
We will need the following Lemma, which we prove in an Appendix.
\begin{lemma}\label{lemma:six Fourier facts}
With notation as before, 
\begin{itemize}
\item[(i)] We have $\mathbb{E}[Z_{v} ^2] = 1- \mu_v ^2 = 4\mu + \mathcal{O}(1/\sigma)$.
\item[(ii)] Let $\Gamma_{u} = \{ \{u, v\} : v \in V \setminus \{u\}\} \subseteq \mathcal{E}$.  If $S \not \subseteq \Gamma_v$, then $\widehat{Z_{v}}(S) = 0$.  Moreover, $\widehat{Z_v}(\emptyset) = 0$.
\item[(iii)] $\widehat{Z_v} (S) = \mathcal{O}(1/\sqrt{n})$ if $|S| \leq 10 k^2$
\item[(iv)] $\widehat{Z_v} (S) = \mathcal{O}(1/n)$ if $2 \leq |S| \leq 10 k^2$
\item[(v)] If $r,b,v$ are distinct with $r \in R_0$ and $b \in B_0$, then $\displaystyle \sum_{e \in \mathcal{E}} \Big(\widehat{Z_r} (e)-\widehat{Z_b}(e) \Big)\widehat{Z_v} (e) = \mathcal{O}\left( \frac{1}{n \sigma} \right)$
\item[(vi)] If $S \neq \emptyset$ and $L \geq 1$, then $\Big | \widehat{\ Z_v ^{L}\ } (S)\Big| \leq 2^L \Big | \widehat{Z_v} (S)\Big| \leq 2^{L+1}$
\end{itemize}
\end{lemma}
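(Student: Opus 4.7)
The plan is to prove all six assertions by direct Fourier-analytic computation. The unifying observation is that for each vertex $v$, the variable $Z_v$ takes only two values ($1-\mu_v$ and $-1-\mu_v$), so $Y_v := Z_v + \mu_v$ is a $\pm 1$-valued indicator depending only on the edges $\Gamma_v$. Concretely, for $v \in R_0$ we have $Y_v = 2\mathbf{1}_{A \geq B} - 1$, where $A$ counts the edges from $v$ to $R_0 \setminus \{v\}$ (so $A \sim \text{Bin}(|R_0|-1, p)$) and $B$ counts those from $v$ to $B_0$ (so $B \sim \text{Bin}(|B_0|, p)$); the symmetric definition handles $v \in B_0$. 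Since $\widehat{Y_v}(S) = \widehat{Z_v}(S)$ for all $S \neq \emptyset$, every Fourier computation reduces to understanding $\widehat{Y_v}(S)$.

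Parts (i), (ii), and (vi) follow quickly. For (ii), $\widehat{Z_v}(\emptyset) = \mathbb{E}[Z_v] = 0$ by design, and $\widehat{Z_v}(S) = 0$ for $S \not\subseteq \Gamma_v$ since $Y_v$ depends only on the edge variables in $\Gamma_v$. For (i), direct computation on the two-valued variable gives $\mathbb{E}[Z_v^2] = 1-\mu_v^2 = (1-\mu_v)(1+\mu_v)$; comparing with $4\mu$, the two factors differ only by decrementing the parameter of one binomial by one, which by Lemma~\ref{lemma:stupid result}(i) perturbs the probability by $\mathcal{O}(1/\sigma)$. For (vi), since $Z_v$ takes only two values $\alpha = 1-\mu_v$ and $\beta = -1-\mu_v$, every power $Z_v^L$ is affine in $Z_v$: one can write $Z_v^L = c_1 + c_2 Z_v$ with $c_2 = (\alpha^L - \beta^L)/(\alpha - \beta)$. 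Since $|\alpha|, |\beta| \leq 2$ and $\alpha - \beta = 2$, this gives $|c_2| \leq 2^L$, and for $S \neq \emptyset$ we get $\widehat{Z_v^L}(S) = c_2 \widehat{Z_v}(S)$. The bound $|\widehat{Z_v}(S)| \leq 2$ follows from Cauchy--Schwarz and the pointwise estimate $|Z_v| \leq 2$.

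For (iii) and (iv), I would compute $\widehat{Z_v}(S)$ for $S \subseteq \Gamma_v$ with $|S| = m$ by conditioning on the edge variables in $S$ one at a time. For each edge $\{u,v\} \in S$, the one-edge calculation produces a factor $2\sqrt{p(1-p)}$ together with a discrete difference in either the $A$-variable (if $u \in R_0$) or the $B$-variable (if $u \in B_0$). Iterating yields
\[
\widehat{Z_v}(S) = (2\sqrt{p(1-p)})^{m} \cdot \mathbb{E}\Big[ \Delta_A^{m_1} \Delta_B^{m_2} \mathbf{1}_{A' \geq B'} \Big],
\]
where $m_1 + m_2 = m$ and $A', B'$ are independent binomials on the remaining edge variables. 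For $m = 1$, this expectation is a single point probability of size $\mathcal{O}(1/\sigma)$ by Lemma~\ref{lemma:stupid result}(i), giving $|\widehat{Z_v}(S)| = \mathcal{O}(\sqrt{p(1-p)}/\sigma) = \mathcal{O}(1/\sqrt{n})$. For $m \geq 2$, the higher-order difference collapses (after grouping terms) into a first difference of point probabilities, which is $\mathcal{O}(1/\sigma^2)$ by Lemma~\ref{lemma:stupid result}(ii). This yields $|\widehat{Z_v}(S)| = \mathcal{O}(1/n)$ uniformly for $m$ in any bounded range, establishing (iii) and (iv).

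The main obstacle is (v), which requires a genuine cancellation rather than per-term bounds. By (ii), the only edges $e$ contributing to $\sum_{e} (\widehat{Z_r}(e) - \widehat{Z_b}(e))\widehat{Z_v}(e)$ are $\{r,v\}$ and $\{b,v\}$, so the sum collapses to
\[
\widehat{Z_r}(\{r,v\})\, \widehat{Z_v}(\{r,v\}) \;-\; \widehat{Z_b}(\{b,v\})\, \widehat{Z_v}(\{b,v\}).
\]
Using the explicit formulas from part (iii), each term equals $\pm 4p(1-p)$ times the square of a binomial point probability of the form $\mathbb{P}(U_i = t_i)$, where $U_1, U_2$ are independent differences of binomials with nearly identical parameters and shifts $t_i$ differing by a bounded amount. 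Each such probability is individually $\mathcal{O}(1/\sigma)$, so the hard step is to show that these two probabilities actually differ by only $\mathcal{O}(1/\sigma^2)$. I would accomplish this by splitting a single $\text{Bern}(p)$ step off the appropriate binomial in each probability, after which both become convex combinations of $\mathbb{P}(W = 0)$ and $\mathbb{P}(W = -1)$ for a common random variable $W$; the difference then simplifies to $(2p-1)\big(\mathbb{P}(W = 0) - \mathbb{P}(W = -1)\big)$, which is $\mathcal{O}(1/\sigma^2)$ by Lemma~\ref{lemma:stupid result}(ii). Since $a^2 - b^2 = (a+b)(a-b)$ with $a+b = \mathcal{O}(1/\sigma)$ and $a-b = \mathcal{O}(1/\sigma^2)$, the full sum is $\mathcal{O}(p(1-p)/\sigma^3) = \mathcal{O}(1/(n\sigma))$, as required.
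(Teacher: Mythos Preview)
Your proposal is correct and follows essentially the same approach as the paper. Parts (i), (ii), and (vi) are handled identically via the two-valued structure of $Z_v$; parts (iii) and (iv) are proved by exactly the same discrete-differencing computation (the paper writes it out as an explicit sum over $I\cup J\subseteq S$ and then peels off one edge at a time, which is your $\Delta_A,\Delta_B$ argument in different notation). For part (v) both arguments reduce the sum to the two edges $\{r,v\}$ and $\{b,v\}$ and then exploit the $\mathcal{O}(1/\sigma^2)$ closeness of the relevant point probabilities via Lemma~\ref{lemma:stupid result}(ii); your difference-of-squares factorization (using $\widehat{Z_r}(rv)=\widehat{Z_v}(rv)$ and $\widehat{Z_b}(bv)=\widehat{Z_v}(bv)$, which you should state explicitly) together with the Bernoulli-splitting identity is a slightly cleaner packaging of the same cancellation the paper obtains by the bilinear rearrangement $AC-BD=C(A+B)-B(C+D)$.
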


Let's prove the moment estimate assuming Lemma \ref{lemma:six Fourier facts}.

\subsection*{Estimating $\mathbb{E}[Z^k]$ assuming Lemma \ref{lemma:six Fourier facts}}
For each tuple $\vec{w} = (w_1, \ldots, w_k) \in V^k$ consisting of (not necessarily distinct) vertices, set $Z_{\vec{w}} = \prod_{i=1} ^{k} Z_{w_i}$ and $\varepsilon(\vec{w}) = \prod_{i=1} ^{k} \varepsilon(w_i)$.  In this notation, we have $\mathbb{E}[Z^k] = \sum_{\vec{w} \in V^k} \varepsilon(\vec{w}) \mathbb{E}[Z_{\vec{w}}]$.

\begin{itemize}
\item For each $c \leq k$, let $\Lambda_{c} \subseteq V^k$ denote the strings in which there are exactly $c$ vertices each appearing exactly once.
\end{itemize}
We will argue that for $\mathbb{E}[Z^k]$, the contribution due to $\Lambda_0$ is the main term, and all of the others are small.  Namely, we'll prove
\[
\sum_{\vec{w} \in \Lambda_0} \varepsilon(\vec{w}) \mathbb{E}[Z_{\vec{w}}] = \begin{cases}\mathcal{O}\left(n^{(k-1)/2} \right) \quad &\text{if $k$ is odd,}\\
\mathcal{O}\left(n^{(k-1)/2} \right) + (k-1)!! \cdot \Big[|R_0| (1-\mu_r ^2) + |B_0| (1-\mu_b ^2) \Big]^{k/2} \quad &\text{if $k$ is even}.\\
\end{cases}
\]
And moreover for all $c \geq 1$, we'll show $\sum_{\vec{w} \in \Lambda_c} \varepsilon(\vec{w}) \mathbb{E}[Z_{\vec{w}}] = \mathcal{O}(n^{k/2}(1/\sigma + \Delta/n + 1/\sqrt{n}))$.

Together, this will yield \eqref{moments of Z} since Lemma \ref{lemma:six Fourier facts}.(i) implies $\Big[|R_0| (1-\mu_r ^2) + |B_0| (1-\mu_b ^2) \Big]^{k/2}$ is of the form $n^{k/2} (4\mu \pm \mathcal{O}(1/\sigma))^{k/2} = (4n\mu)^{k/2} \pm \mathcal{O}(n^{k/2}/\sigma)$ [valid since $4\mu + \mathcal{O}(1/\sigma)$ is bounded].

\subsubsection*{Expansion of $\mathbb{E}[Z_{\vec{w}}]$}
Suppose $T$ and $U$ are disjoint sets of vertices such that $|T| + |U| \leq k$, and suppose for each $u \in U$, there is an integer $\alpha_u \in [2,k]$.  Then we have
\[
\mathbb{E}\left[ \prod_{t \in T} Z_{t} \cdot \prod_{u \in U} Z_{u} ^{\alpha_u} \right] = \sum_{(S_v)_{v \in T \cup U}} \prod_{t \in T} \widehat{\ Z_{t} \ }(S_t) \prod_{u \in U} \widehat{\ Z_{u} ^{\alpha_u} \ }(S_u) \cdot \mathbb{E}\left[ \prod_{v \in T \cup U} \Phi_{S_v} \right],
\]
where the sum is taken over all choices of $(S_v)_{v \in T \cup U}$ in $\mathcal{E}^{|T| + |U|}$ (so that each $v \in T \cup U$ is independently given a subset of $\mathcal{E}$).

Let $\mathcal{H} \subseteq E$ denote the $2$-element subsets of $T \cup U$, and suppose $(S_v)_{v \in T \cup U}$ corresponds to a term in the above summation that isn't zero.  By Lemma \ref{lemma:six Fourier facts}.(ii), for each $v$ we must have $S_v \subseteq \Gamma_v$ (since otherwise one of the Fourier coefficients would be 0).  Moreover every edge of $\bigcup_{u} S_u$ must appear in at least two sets $S_v$ (otherwise the expectation of the $\Phi$ functions would be 0).  Together, this implies that each edge of $\bigcup_{u} S_u$ appears in exactly two sets, that $\bigcup_{u} S_u \subseteq \mathcal{H}$ and that for all $v$ we have $S_v = \left[ \bigcup_{u} S_u \right] \cap \Gamma_v$.  Thus, the above summation becomes
\[
\mathbb{E}\left[ \prod_{t \in T} Z_{t} \cdot \prod_{u \in U} Z_{u} ^{\alpha_u} \right] = \sum_{H \subseteq \mathcal{H}} \prod_{t \in T} \widehat{\ Z_{t} \ }(H \cap \Gamma_t) \prod_{u \in U} \widehat{\ Z_{u} ^{\alpha_u} \ }(H \cap \Gamma_u).
\]
Since $|\mathcal{H}| \leq 10 k^2$, Lemma \ref{lemma:six Fourier facts}.(iii) implies that each term $\widehat{Z_t}(H \cap \Gamma_t)$ is $\mathcal{O}(1/\sqrt{n})$.  Therefore, for any fixed $H \subseteq \mathcal{H}$ we have
\begin{equation}\label{crude term-wise Fourier bound}
\prod_{t \in T} \widehat{\ Z_{t} \ }(H \cap \Gamma_t) \prod_{u \in U} \widehat{\ Z_{u} ^{\alpha_u} \ }(H \cap \Gamma_u) = \mathcal{O}\left(\dfrac{1}{n^{|T|/2}} \right) \prod_{u \in U} \widehat{\ Z_{u} ^{\alpha_u} \ }(H \cap \Gamma_u) = \mathcal{O}\left(n^{-|T|/2} \right).
\end{equation}
If there is a value $u \in U$ for which $H \cap \Gamma_u \neq \emptyset$, then we could apply part (vi) [and (iii)] of Lemma \ref{lemma:six Fourier facts} to show the above expression is $\mathcal{O}(n^{-(|T|+1)/2})$.  Similarly, if there is a vertex $t \in T$ for which $|H \cap \Gamma_t| \neq 1$, then we could apply (iv) to again conclude this expression is $\mathcal{O}(n^{-(|T|+1)/2})$.  Together, since there are at most $2^{|\mathcal{H}|} = \mathcal{O}(1)$ choices for $H$, this implies
\begin{equation}\label{Fourier expansion equation}
\mathbb{E}\left[ \prod_{t \in T} Z_{t} \cdot \prod_{u \in U} Z_{u} ^{\alpha_u} \right] = \mathcal{O}(n^{-(|T|+1)/2}) + \left[ \prod_{u \in U} \widehat{\ Z_{u} ^{\alpha_u} \ }(\emptyset) \right]\cdot  \sum_{H \in \mathcal{M}} \prod_{t \in T} \widehat{Z_t}(H \cap \Gamma_t),
\end{equation}
where $\mathcal{M}$ is the set of all perfect matchings on the vertex set $T$.

\subsubsection*{Contribution from $\Lambda_0$}
Let $\Lambda_0 ^{+} \subseteq \Lambda_0$ denote the set of strings where each vertex appears at most twice.  Then we have
\[
\sum_{\vec{w} \in \Lambda_0} \varepsilon(\vec{w}) \mathbb{E}[Z_{\vec{w}}] = \sum_{\vec{w} \in \Lambda_0 ^+} \mathbb{E}[Z_{\vec{w}}] + \mathcal{O}(|\Lambda_0 \setminus \Lambda_0 ^+|) = \sum_{\vec{w} \in \Lambda_0 ^+} \mathbb{E}[Z_{\vec{w}}] + \mathcal{O}(n^{(k-1)/2}),
\]
which holds since each term of the summation is at most $\mathcal{O}(1)$ and each string in $\Lambda_0 \setminus \Lambda_0 ^+ \subseteq V^k$ has an element appearing more than twice (and none appearing only once).  When $k$ is odd, $\Lambda_0 ^+ = \emptyset$, so the above summation is $\mathcal{O}(n^{(k-1)/2})$.  Otherwise, for $k$ even, \eqref{Fourier expansion equation} [together with Lemma \ref{lemma:six Fourier facts}.(i)] implies
\begin{eqnarray*}
\sum_{\vec{w} \in \Lambda_0 ^+} \mathbb{E}[Z_{\vec{w}}] &=& \sum_{U \subseteq V,\ \ |U|=k/2} \dfrac{k!}{2^{k/2}}\mathbb{E}\left[ \prod_{u \in U} Z_{u} ^{2} \right] = \mathcal{O}\left(n^{(k-1)/2}\right) + \sum_{U \subseteq V,\ \ |U|=k/2} \dfrac{k!}{2^{k/2}} \prod_{u \in U} \widehat{\ Z_{u} ^{2}\ } (\emptyset)\\
&=& \mathcal{O}\left(n^{(k-1)/2}\right) + \dfrac{k!}{2^{k/2}} \sum_{j=0}^{k/2} {|R_0| \choose j} {|B_0| \choose {k/2 -j}} (1-\mu_r ^2)^{j} (1-\mu_b ^2)^{k/2 -j}\\
&=& \mathcal{O}\left(n^{(k-1)/2}\right) + \dfrac{k!}{2^{k/2}} \left( \dfrac{1}{(k/2)!} \Big[ |R_0| (1-\mu_r ^2) + |B_0| (1- \mu_b ^2) \Big]^{k/2} + \mathcal{O}\left(n^{k/2 -1}\right) \right)\\
&=& \mathcal{O}\left(n^{(k-1)/2}\right) + (k-1)!! \cdot \Big[ |R_0| (1-\mu_r ^2) + |B_0| (1- \mu_b ^2) \Big]^{k/2}.
\end{eqnarray*}

\subsubsection*{Contribution from $\Lambda_c$ for $c \geq 1$}
For ease of notation, let us now assume without loss of generality that $|R_0| \geq |B_0|$, and let us write $R_0 = \{r_1, r_2, \ldots, r_m\} \cup X$ and $B_0 = \{b_1, b_2, \ldots, b_m\}$ where $|X| = \Delta = |R_0| - |B_0|$.  For each $c \geq 1$, let $\Lambda_c ^+$ denote the strings $\vec{w} \in \Lambda_c$ such that (a) $\vec{w}$ contains no elements of $X$, (b) $\vec{w}$ contains no elements more than twice, and (c) for each $i$, if $r_i$ appears in $\vec{w}$, then $b_i$ does not.  Then using \eqref{crude term-wise Fourier bound} we have
\begin{eqnarray*}
\sum_{\vec{w} \in \Lambda_c \setminus \Lambda_c ^+} \varepsilon(\vec{w}) \mathbb{E}[Z_{\vec{w}}] &=& |\Lambda_c \setminus \Lambda_c ^+| \cdot \mathcal{O}(n^{-c/2}) = \mathcal{O}\left(n^{k/2 - 1} (\Delta +\sqrt{n}) \right),
\end{eqnarray*}
which follows since
\[
|\Lambda_c \setminus \Lambda_c ^+| = \mathcal{O}\left(n^{(k+c)/2 -1} |X| \right) + \mathcal{O}\left(n^{(k+c-1)/2} \right) + \mathcal{O}\left(n^{(k+c)/2 -1} \right).
\]
Moreover, we have
\[
\sum_{\vec{w} \in \Lambda_c ^+} \varepsilon(\vec{w}) \mathbb{E}[Z_{\vec{w}}] = \dfrac{k!}{2^{(k-c)/2}} \sum_{U \subseteq V} \sum_{T \subseteq V \setminus U} \mathbb{E}\left[ \prod_{t \in T} (Z_{r_t} - Z_{b_t}) \prod_{u \in U} (Z_{r_u} ^2 + Z_{b_u} ^2) \right].
\]

Fixing $U$ and $T$ and expanding this product out as in \eqref{Fourier expansion equation}, we see
\[
\mathbb{E}\left[ \prod_{t \in T} (Z_{r_t} - Z_{b_t}) \prod_{u \in U} (Z_{r_u} ^2 + Z_{b_u} ^2) \right] = \mathcal{O}\left(n^{-(c+1)/2} + \sum_{H \in \mathcal{M}} \prod_{t \in T} [\widehat{Z_{r_t}}(H \cap \Gamma_{r_t}) - \widehat{Z_{b_t}} (H \cap \Gamma_{b_t})] \right),
\]
where $\mathcal{M}$ is the set of graphs $H$ on the vertex set $\bigcup_{t \in T} \{r_t, b_t\}$ such that for all $t \in T$, we have $r_t \not \sim b_t$ and there is exactly one edge of $H$ meeting $\{r_t, b_t\}$.  For distinct $i,j \in T$, let $\mathcal{M}_{i,j} \subseteq \mathcal{M}$ denote the subset of graphs containing one of the edges in $\{r_i, b_i\} \times \{r_j, b_j\}$.  Then we have
\begin{eqnarray*}
\sum_{H \in \mathcal{M}} \prod_{t \in T} [\widehat{Z_{r_t}}(H \cap \Gamma_{r_t}) - \widehat{Z_{b_t}} (H \cap \Gamma_{b_t})]  &=& \dfrac{1}{|T|} \sum_{i,j} \sum_{H \in \mathcal{M}_{i,j}} \prod_{t \in T} [\widehat{Z_{r_t}}(H \cap \Gamma_{r_t}) - \widehat{Z_{b_t}} (H \cap \Gamma_{b_t})]\\
&=& \dfrac{1}{|T|} \sum_{i,j} \sum_{H \in \mathcal{M}_{i,j}} \prod_{t \notin \{i,j\}} [\widehat{Z_{r_t}}(H \cap \Gamma_{r_t}) - \widehat{Z_{b_t}} (H \cap \Gamma_{b_t})]\\
& & \qquad \times \left(\dfrac{1}{4} \sum_{e} \Big[\widehat{Z_{r_i}}(e) - \widehat{Z_{b_i}}(e) \Big] \cdot \Big[\widehat{Z_{r_i}}(e) - \widehat{Z_{b_i}}(e) \Big] \right)\\
&=& \mathcal{O} \left( n^{1-c/2} \right) \cdot \mathcal{O}\left( \dfrac{1}{n \sigma} \right) = \mathcal{O}(n^{-c/2} / \sigma),
\end{eqnarray*}
where the last line follows from Lemma \ref{lemma:six Fourier facts}.(v).  Therefore, we have
\[
\sum_{\vec{w} \in \Lambda_c} \varepsilon(\vec{w}) \mathbb{E}[Z_{\vec{w}}] = \sum_{\vec{w} \in \Lambda_c \setminus \Lambda_c ^{+}} \varepsilon(\vec{w}) \mathbb{E}[Z_{\vec{w}}] + \sum_{\vec{w} \in \Lambda_c ^+} \varepsilon(\vec{w}) \mathbb{E}[Z_{\vec{w}}] = \mathcal{O}\left(n^{k/2 - 1} (\Delta +\sqrt{n}) \right) + \mathcal{O}\left(n^{k/2} / \sigma \right),
\]
as desired. $\qed$

\section{Derivations of Proposition \ref{lemma:sex panther} and Corollary \ref{theorem:the main result}}\label{sect:prop and cor}
\subsection*{Proof of Proposition \ref{lemma:sex panther}}
\begin{proof}
For $G \sim G_{n,p}$, call a pair of sets $(R,U)$ \textit{bad} iff (1) $|R| \geq n/2 + \alpha \sqrt{n(1-p)/p}$, (2) $|U| = \delta n$, and (3) every vertex of $U$ has at least as many neighbors in $V \setminus R$ as it does in $R$.  For each pair $(R,U)$, we will show under the hypotheses of the proposition that if $\sqrt{np(1-p)}$ is large enough (depending only on $\varepsilon$), there is a value $q < 1$ such that $\mathbb{P}((R,U) \text{ is bad}) \leq (q/4)^{n}$, which will prove that with probability at least $1 - q^n$, the desired conclusion holds for every set $R$ and every $\delta \in (\varepsilon, 1-\varepsilon)$.

Let $R, U$ be fixed satisfying conditions (1) and (2) above, and define $B = V \setminus R$.  Note that if we condition on the edges in $G[U]$, then the events ``$d_{R} (u) \leq d_{B} (u)$'' are independent as $u$ ranges over $U$. Thus, we have
\begin{eqnarray*}
\mathbb{P}((R,U) \textit{ is bad}\ |\ G[U]) &=& \prod_{u \in U} \mathbb{P} \Big( d_{R \setminus U} (u) - d_{B \setminus U} (u) \leq d_{B \cap U} (u) - d_{R \cap U} (u) \ | \ G[U] \Big)\\
&\leq & \mathbb{P} \left( d_{R \setminus U} (u) - d_{B \setminus U} (u) \leq 1 + \dfrac{2e(B \cap U) - 2 e(R \cap U)}{|U|} \ | \ G[U] \right)^{|U|},
\end{eqnarray*}
where $e(S)$ denotes the number of edges of $G[S]$, and this inequality holds due to the log-concavity of binomial distributions.\footnote{Namely, if $Q(x) = \mathbb{P}(Bin(m,p) \leq x)$, then a routine computation shows for any $x < y$ we have $Q(x)Q(y) \leq Q(x+1)Q(y-1)$.  Thus, this concavity holds for linear combinations of binomial random variables as well.}  

Letting $M$ denote the random variable $M = 1+ \dfrac{2e(B \cap U) - 2e(B \cap U)}{|U|}$, we have
\begin{equation}\label{equation:set up with expected value for bad quadruples}
\mathbb{P}((R,U) \textit{ bad}) \leq \sum_{t} \mathbb{P} \left(M = t \right) \left[ \mathbb{P} \Big( Bin(|R \setminus U|, p) - Bin(|B \setminus U|, p) \leq t \Big) \right]^{|U|}.
\end{equation}
The mean of $M$ is easily computed as
\begin{eqnarray*}
\mathbb{E}[M - 1] / p &=& \dfrac{2}{|U|}\left[{|B \cap U| \choose 2} - {|R \cap U| \choose 2} \right] = |U| - 2|R \cap U|-1 + 2|R \cap U|/|U|\\ 
&=& |B| - |R| + |R \setminus U| - |B \setminus U| - 1 + 2|R \cap U|/|U|.
\end{eqnarray*}
And for any fixed $0 < \gamma \leq 2$, setting $\Lambda = \sqrt{np(1-p)}$, Bernstein's inequality implies that
\begin{eqnarray*}
\mathbb{P}(M \geq \mathbb{E}[M] + \gamma \Lambda) 
&\leq& \exp \left(\dfrac{-|U|^2 \gamma^2 n p (1-p)/8}{\left[\cho{|R \cap U|}{2} + \cho{|B \cap U|}{2} \right] p(1-p) + |U| \gamma \sqrt{np(1-p)}/6} \right)\\
&\leq& \exp \left(-(1+o(1))\dfrac{|U|^2 \gamma^2 n/4}{|U|^2 - 2|R \cap U| |B \cap U|} \right) \leq \exp \left(-(1+o(1))\dfrac{\gamma^2 n}{2} \right),
\end{eqnarray*}
where the $o(1)$ term tends to $0$ (as $\sqrt{np(1-p)} \to \infty$) uniformly over all $\gamma \in [0, 2]$.

Let $Z \sim Bin(|R \setminus U|, p) - Bin(|B \setminus U|, p)$.  Since $\sqrt{np(1-p)} \to \infty$ and since $|R \setminus U|+|B \setminus U|$ is order $n$, the central limit theorem applies to $Z$, and for all fixed $\gamma \geq 0$ we have
\begin{eqnarray*}
\mathbb{P} \Big( Z \leq \mathbb{E}[M] + \gamma \Lambda \Big) &=& \mathbb{P} \Big( Z \leq \mathbb{E}[Z] + 1 + (|B|-|R|)p - p + 2p|R \cap U|/|U| +\gamma \Lambda \Big)\\
&\leq& o(1) + \mathbb{P} \Big( Z \leq \mathbb{E}[Z] - (2\alpha -\gamma) \Lambda \Big)
= o(1) + \Phi \left(\frac{\gamma - 2\alpha}{\sqrt{1 - |U|/n}} \right).
\end{eqnarray*}
Thus, putting these together we obtain
\begin{eqnarray*}
\mathbb{P}((R,U) \text{ bad}) &\leq& \sum_{\gamma} \mathbb{P} \left(M = \mathbb{E}[M] + \gamma \Lambda \right) \mathbb{P} \Big(Z \leq \mathbb{E}[M] + \gamma \Lambda \Big)^{|U|} \\
&\leq & \mathbb{P} \Big(Z \leq \mathbb{E}[M] \Big)^{|U|} + \mathbb{P} \Big(M \geq \mathbb{E}[M] + 2 \Lambda \Big)\\
& & \qquad + \sum_{0 < \gamma <2} \mathbb{P} \left(M = \mathbb{E}[M] + \gamma \Lambda \right) \mathbb{P} \Big(Z \leq \mathbb{E}[M] + \gamma \Lambda \Big)^{|U|}\\
&\leq& (0.2 +o(1))^{n} + n^{2} \Bigg((1+o(1)) \sup_{0 \leq \gamma \leq 2} e^{-\gamma^2 / 2} \left[\Phi \left(\frac{\gamma - 2\alpha}{\sqrt{1 - |U|/n}} \right) \right]^{|U|/n} \Bigg)^n.
\end{eqnarray*}
Finally, by absorbing smaller terms into the $o(1)$ error, we obtain the desired bound.
\end{proof}

\subsection*{Proof of Corollary \ref{theorem:the main result}}
\begin{proof} We first prove the second part of the corollary given the first.  For this, we assume $5 \leq \Delta p$ and $25 \leq \sqrt{np(1-p)}$.

\textbf{Case 1:} First suppose $\Delta p \leq \sqrt{np(1-p)}$.  Note that for all $x \geq 0$, an application of the mean value theorem implies $\Phi(x) \geq 1/2 + x /(2 \pi e^{x^2})^{1/2}$.  With a degree of foresight, let us set $t = 0.9 (\Delta p) /\sqrt{2 \pi e}$, which is greater than $1$ since $5 \leq \Delta p$.  Since $\Delta p \leq \sqrt{np(1-p)}$, we have
\[
n \Phi \left( \dfrac{\Delta p}{\sqrt{np(1-p)}} \right) - \dfrac{nt}{\sqrt{np(1-p)}} \geq \dfrac{n}{2} + \dfrac{n\Delta p / \sqrt{2 \pi e}}{\sqrt{np(1-p)}} - \dfrac{nt}{\sqrt{np(1-p)}} \geq \dfrac{n}{2} + 0.02\Delta  \sqrt{np/(1-p)}.
\]
Thus, using the first part of the lemma, we obtain
\[
\mathbb{P}\Big(Z \geq n/2 + 0.02 \Delta \sqrt{np/(1-p)} \Big) \geq 1 - \dfrac{C p(1-p)}{t^2} \geq 1 - \dfrac{C}{\Delta}.
\]

\textbf{Case 2:} Now suppose $\Delta p  \geq \sqrt{np(1-p)}$.  Numerical computation shows $\Phi(1) - 0.04 \geq 0.8$, so setting $t = 0.04 \sqrt{np(1-p)}$ (which is in fact at least $1$ by assumption) we have
\[
\mathbb{P}(Z \geq 0.8 n) \geq \mathbb{P}\left(Z \geq n \Phi\left( \dfrac{\Delta p}{\sqrt{np(1-p)}} \right) - \dfrac{nt}{\sqrt{np(1-p)}} \right) \geq 1 - \dfrac{Cp(1-p)}{t^2} \geq 1 - \dfrac{625C}{n}.
\]

Thus, in either case, we have a lower bound of at least $1 - 625C / \Delta$ for each corresponding probability, so in both cases, we know one (if not both) of these probabilities is at least this large, which finishes the proof (with the value of $C$ replaced by $625C$).

\hrulefill

We now turn our attention to proving the first statement.  For each $v \in V,$ let $Z_v$ be the indicator for the event that $v$ will be red after one step of the majority dynamics process.  Then $\mathbb{E}[Z_v] \geq \mathbb{P}(Bin(|R_0|, p) - Bin(|B_0|,p) > 0)$.  Setting $W = Bin(|R_0|,p) - Bin(|B_0|,p)$, we have $\mathbb{E}[W] = \Delta p$, and using a version of the Berry-Esseen theorem from Shevstova \cite{berryEsseen}, we obtain
\begin{eqnarray*}
\mathbb{E}[Z_v] &\geq & \mathbb{P}(W > 0) = \mathbb{P}(W - \mathbb{E}[W] > -\Delta p) = \mathbb{P}\left(\dfrac{W - \mathbb{E}[W]}{\sqrt{n p(1-p)}} > \dfrac{-\Delta p}{\sqrt{n p(1-p)}} \right)\\
&\geq& \Phi \left( \dfrac{\Delta p}{\sqrt{np(1-p)}} \right) - 0.4748 \dfrac{p (1-p)^3 + (1-p)p^3}{\sqrt{(n-1) p^3 (1-p)^3}}\\
&\geq& \Phi \left( \dfrac{\Delta p}{\sqrt{np(1-p)}} \right) - \dfrac{0.475}{\sqrt{np (1-p)}}.
\end{eqnarray*}
Thus, since $|R_1| = \sum_{v} Z_v$, we have
\[
\mathbb{E}|R_1| \geq n \Phi\left( \dfrac{\Delta p}{\sqrt{np(1-p)}} \right) - \dfrac{0.475 \sqrt{n}}{\sqrt{p (1-p)}}.
\]

The variance of $|R_1|$ is computed in Theorem \ref{good CLT} (i) as $\text{Var} |R_1| \leq n \mu +\mathcal{O}(\Delta + n/\sigma) \leq cn$.  Finally, we finish by applying Chebyshev's inequality (with $t \geq 1$) as follows:
\begin{eqnarray*}
\mathbb{P}\left(|R_1| \geq n \Phi\left( \dfrac{\Delta p}{\sqrt{np(1-p)}} \right) - \dfrac{nt}{\sqrt{np(1-p)}} \right) &\geq& \mathbb{P}\left(\mathbb{E}|R_1| - |R_1| \leq \dfrac{(t-0.475)\sqrt{n}}{\sqrt{p(1-p)}} \right)\\
&\geq& \mathbb{P}\left(\mathbb{E}|R_1| - |R_1| \leq \dfrac{t(1-0.475)\sqrt{n}}{\sqrt{p(1-p)}} \right)\\
&\geq& 1 - \dfrac{Cp(1-p)}{t^2}. \qedhere
\end{eqnarray*}
\end{proof}

\section{Applications}\label{sect:applications}
\subsection*{Proof of Theorem \ref{theorem:improving Benjamini, ODonnell}}

\begin{proof}
Each of the three parts of the theorem are proven from Theorem \ref{theorem:the main result} in an analogous way exactly as in \cite{benjamini2016}.  We will treat the three cases simultaneously, and we assume throughout that $n$ is large enough to support our argument (valid by choosing $C$ large enough).

Let $R_0, B_0$ be the initial (uniformly random) coloring of $G$.  Pick a fixed $\gamma \in (0,1)$ small enough so that $\mathbb{P} \left( \Big| |R_0| - |B_0| \Big| \geq \gamma \sqrt{n} \right) > 1 - \varepsilon/2$, which is possible since $|R_0| = n- |B_0|$ is distributed as a binomial random variable with mean $n/2$ and variance $n/4$.  Define the events:
\begin{itemize}
\item $E_0$ is the event that $|R_0| - |B_0| \geq \gamma \sqrt{n}$
\item $E_1$ is the event that $|R_1| \geq n/2 + \min(.03n, 0.11 \gamma n\sqrt{p/(1-p)}) \geq n/2 + 0.11 \gamma n \sqrt{p}$
\item $E_2$ is the event that $|B_2| \leq \dfrac{32}{(0.22 \gamma \sqrt{p})^2 p} \leq \dfrac{700 / \gamma^2 }{p^2}$
\item $E_3$ is the event that $|B_3| \leq \dfrac{32}{(0.8)^2  p} \leq \dfrac{50}{p}$
\item $F$ is the event that every vertex of $G$ has degree at least $np/2$
\end{itemize}

If $p \geq (100/\gamma)/\sqrt{n}$, we claim that all of these events happen simultaneously with probability at least $1/2 - \varepsilon/4 -o(1)$, which---by increasing the constants of the theorem statement to allow us to assume $n$ is sufficiently large---will show they happen with probability at least $1/2 - \varepsilon/2$.  Having shown this, we will be done as follows.
\begin{itemize}
\item[(i)] If $p \geq (100/\gamma)/\sqrt{n}$, then $p \geq 100 / \sqrt{n}$, so we have $200/n < p^2$ implying $50/p < np/4$.  Therefore, assuming $E_3$ and $F$, we would deterministically need $B_4 = \emptyset$.
\item[(ii)] If $p \geq 15 / (\gamma^2 n)^{1/3}$ then $(2800/\gamma^2) / n < p^3$ implying $(700/\gamma^2) / p^2 < np/4$.  Thus, assuming $E_2$ and $F$, we would need to have $B_3 = \emptyset$.
\item[(iii)] If $p \geq 1 - (1+8/\gamma^2)^{-1}$  then $0.11 \gamma \sqrt{p/(1-p)} \geq 0.3$.  Therefore, assuming $E_1$, we would have $|B_1| \leq 0.2n$, which is less than $np/4$ since our assumption on $p$ (and $\gamma < 1$) implies $p > 0.8$.  Therefore, $E_1$ and $F$ would imply $B_2 = \emptyset$.
\end{itemize}

Thus, it only remains to show that if $p \geq (100/\gamma)/\sqrt{n}$, then events $E_0, E_1, E_2, E_3$ and $F$ all simultaneously occur with probability at least $1/2 - \varepsilon/4 - o(1)$.  We prove these in sequential order except for the claim $\mathbb{P}(E_1 | E_0) = 1 - o(1)$, which we present last.

First note that $\mathbb{P}(E_0) \geq 1/2 - \varepsilon/4$ by our choice of $\gamma$.  To show $\mathbb{P}(E_2 | E_1) = 1 - o(1)$ and $\mathbb{P}(E_3 | E_2) = 1 - o(1)$, we refer the reader to Lemmas 3.6 and 3.7 of \cite{benjamini2016}, which prove the following.

\begin{lemma}[Benjamini, Chan, O'Donnell, Tamuz, and Tan \cite{benjamini2016}]\label{benjamini lemma}

If $p \geq \log(n) ^5 / n$, then with probability tending to 1, the graph $G_{n,p}$ will satisfy the following property for all $\alpha > 0$.  If $R \cup B$ is \textbf{any} initial red-blue coloring with $|R| - |B| \geq \alpha n$, then after one step of majority dynamics the number of blue vertices will be at most $32/(\alpha ^2 p)$.
\end{lemma}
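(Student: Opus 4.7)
Proof plan. The plan is to establish the contrapositive via a union bound: with probability $1 - o(1)$ over $G \sim G_{n,p}$, for every subset $T \subseteq V$ and every red-blue partition $V = R \cup B$ with $|R| - |B| \geq \alpha n$ and $|T| > 32/(\alpha^{2} p)$, at least one vertex of $T$ strictly prefers red (i.e., satisfies $d_R(v) > d_B(v)$). Since $|B_1|$ is a fixed integer and the bound $32/(\alpha^{2} p)$ is monotone in $\alpha$, it suffices to verify the statement for $\alpha$ on a dyadic grid of size $\mathcal{O}(\log n)$ inside the non-trivial range $\alpha^{2} p \geq 32/n$, and to invoke a final $\mathcal{O}(\log n)$ union bound to recover the quantifier over all $\alpha > 0$.

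With $\alpha$ fixed and $k = \lceil 32/(\alpha^{2} p)\rceil$, the key estimate is the following, for a fixed triple $(T, R, B)$ with $|T| = k$ and $|R| - |B| \geq \alpha n$: if every $v \in T$ satisfies $d_B(v) \geq d_R(v)$, then summing yields $S := \sum_{v \in T}(d_B(v) - d_R(v)) \geq 0$. The random variable $S$ is a linear combination of independent edge-presence indicators with mean $\mathbb{E} S \leq -\alpha n |T| p + \mathcal{O}(|T|^{2} p)$ and variance $\mathrm{Var}(S) = \mathcal{O}(n |T| p)$. For $|T| \ll \alpha n$, the mean is at most $-\alpha n |T| p / 2$, and Bernstein's inequality yields
\[
\mathbb{P}(S \geq 0) \;\leq\; \exp\!\left(-\frac{c \, (\alpha n |T| p)^{2}}{n |T| p + \alpha n |T| p} \right) \;\leq\; \exp(-c' \alpha^{2} n |T| p) \;=\; \exp(-32 c' n).
\]
A union bound over the $\binom{n}{k} \leq 2^{n}$ choices of $T$ and the $\leq 2^{n}$ initial colorings gives $4^{n} \cdot \exp(-32 c' n) = o(1)$ provided the Bernstein constant satisfies $c' > (\log 4)/32$; the hypothesis $p \geq \log(n)^{5}/n$ guarantees that $\alpha^{2} n p$ is large across the whole dyadic grid, so Bernstein operates with margin to spare.

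The main obstacle is the boundary regime where $\alpha$ is small enough that $|T| = 32/(\alpha^{2} p)$ is no longer negligible next to $\alpha n$: here the $\mathcal{O}(|T|^{2} p)$ correction can swamp the leading $-\alpha n |T| p$ term, and the Bernstein estimate above degrades. To cover this regime I would either relax the constant $32$ in the statement slightly (harmless for downstream applications, which only use the order of the bound) or split into two sub-regimes: a ``bulk'' regime $|T| \leq \alpha n / 4$ where the clean argument above applies verbatim, and a ``thin'' regime where $|T|$ is already a positive constant fraction of $n$, which can be handled by a direct second-moment argument on $|B_{1}|$ as a whole (using that $\mathbb{E}|B_{1}|$ is controlled by Chebyshev applied to individual vertices and that pairs of vertex outcomes are nearly independent). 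In either case the overall architecture --- Bernstein on a sum of independent edge indicators, followed by a union bound over $T$, colorings, and a dyadic grid of $\alpha$ --- is unchanged.
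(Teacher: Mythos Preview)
The paper does not prove this lemma; it simply quotes Lemmas~3.6 and~3.7 of Benjamini--Chan--O'Donnell--Tamuz--Tan and refers the reader there, so there is no in-paper argument to compare against. Your union-bound-plus-Bernstein strategy is sound and is in the same family as the argument the present paper uses for its own Proposition~\ref{lemma:sex panther} (which refines the idea by conditioning on $G[U]$ and exploiting log-concavity, but for the crude constant~$32$ here that refinement is unnecessary).

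Two remarks on your execution. First, your correction term $\mathcal{O}(|T|^{2}p)$ in $\mathbb{E}[S]$ is too pessimistic: a direct computation gives
\[
\mathbb{E}[S]=p|T|\bigl(|B|-|R|\bigr)+p\bigl(|T\cap R|-|T\cap B|\bigr)\le -\alpha n|T|p+p|T|,
\]
so the error is only $\mathcal{O}(|T|p)$. In the non-vacuous range $\alpha\ge\sqrt{32/(np)}$ one has $\alpha n\ge\sqrt{32n/p}\to\infty$, so your ``boundary regime'' concern evaporates and no case split is needed. Second, with this correction the Bernstein exponent is at least $\alpha^{2}n|T|p/10\ge 3.2\,n$, which comfortably beats the $4^{n}$ union bound since $\log 4\approx 1.39$; the hypothesis $p\ge\log(n)^{5}/n$ is not what provides the ``margin to spare'' here (the exponent is $\Theta(n)$ regardless of~$p$), it only guarantees $np\to\infty$ so that the non-vacuous range is nonempty and $\alpha n\to\infty$ uniformly. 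Finally, the dyadic grid is a slight over-elaboration: since $|R|-|B|$ is an integer in $\{1,\dots,n\}$, you can union directly over those $n$ values.
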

Therefore, with high probability we have that $E_1$ will imply $E_2$, and  $E_2$ in turn implies $E_3$ (because $(700/\gamma^2) / p^2 < 0.1 n$).  We control $\mathbb{P}(F)$ by a simple union bound together with Chernoff's inequality to obtain
\[
\mathbb{P}(F) \geq 1 - n \mathbb{P}(Bin(n-1, p) < np/2) \geq 1 - n \exp \left(\dfrac{-np(1/2)^2}{2} \right) = 1 -\exp \left(\log(n) - \dfrac{np}{8} \right),
\]
which tends to $1$ since $p \geq (100/\gamma)/\sqrt{n} \gg \log(n)/n$.

Finally, we must show $\mathbb{P}(E_1 |E_0) \geq 1 - o(1)$.  If $\sqrt{np(1-p)} \geq 25$, then our choice of $p$ immediately allows us to apply Theorem \ref{theorem:the main result}, which implies $\mathbb{P}(E_1 | E_0) \geq 1 - \mathcal{O}((\gamma\sqrt{n})^{-1})$.  On the other hand, if $\sqrt{np(1-p)} < 25$, then we would need to have $p \geq 2/3$ (since $np \to \infty$) and thus $1-p < 2 \cdot (3/2) 25^2/n \leq 1000/n$.  Therefore, with probability at least $1- o(1)$, every vertex in $G$ has degree at least $n - \log(n)$ since
\begin{eqnarray*}
\mathbb{P}(\text{min deg}(G) \leq n - \log(n)) &\leq& n \cdot \mathbb{P}(Bin(n, 1-p) \geq \log(n))\\
&\leq& n \cdot \mathbb{P}(Bin(n, 1000/n) \geq \log(n))\\
&\leq& n \exp\left( -(3/2 + o(1)) \log(n)\right) = o(1),
\end{eqnarray*}
where the last inequality follows from Bernstein's inequality.  Therefore, if $E_0$ occurs, then with probability $1 - o(1)$ we would need $R_1 = V$ (and in particular $\mathbb{P}(E_1 | E_0) = 1 -o(1)$).  This is because with probability $1-o(1)$ every vertex of $G$ has degree at least $n - \log(n)$, so if $E_0$ occurs, then initially every vertex would have at least $|R_0| - |B_0| - \log(n) > 0$ more red neighbors than blue.  (So in this case, majority dynamics will typically end after only one step.)

In any case, regardless of the value of $\sqrt{np(1-p)}$, we've shown $\mathbb{P}(E_1 | E_0) = 1 - o(1)$, completing the proof.
\end{proof}

\subsection*{Proof of Theorem \ref{theorem:lead of 3 will win}}
For this proof, we need the following lemma, as proven in the Appendix.
\begin{lemma}\label{lemma:sway over some vertices}
Let $V = X \cup R \cup B$ where $|R| = |B|$, and suppose we independently sample the random graph $G_{|V|, p}$.  Define the sets $T_R = \{v \in R \ : \ 0 < d_{B} (v) - d_{R} (v) \leq d_{X} (v)\}$ and $T_B = \{v \in B \ : \ 0 \leq d_{B} (v) - d_{R} (v) < d_{X} (v)\}$, and let $\sigma = \sqrt{2|R|p(1-p)}$.  If $1+|X|p = o\Big(\sigma \Big)$ then for any fixed $\varepsilon > 0$, with high probability we have
\[
\Bigg| |T_R \cup T_B| - \mathbb{E}[|T_R \cup T_B|] \Bigg| \leq \sqrt{|V|} \cdot \left(\dfrac{|X| p}{\sigma} \right)^{1/2 - \varepsilon}.
\]
Moreover, $\mathbb{E}[|T_R \cup T_B|] = \dfrac{|V| \cdot |X| p}{\sigma \sqrt{2\pi}} + \mathcal{O}\left( \dfrac{|V| (1+ |X|p)^2}{\sigma ^2}\right)$.
\end{lemma}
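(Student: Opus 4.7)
The plan is a second-moment (Chebyshev) argument, using the anticoncentration bounds of Lemma \ref{lemma:stupid result} for the relevant binomial differences. Write $A_v := \mathbf{1}[v \in T_R \cup T_B]$, so $A_v = 0$ for $v \in X$ and $|T_R \cup T_B| = \sum_{v \in R \cup B} A_v$.

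For the expectation, fix $v \in R$; then $A_v = \mathbf{1}[0 < W_v \leq d_X(v)]$, where $W_v := d_B(v) - d_R(v)$ is a binomial difference with mean $p$ and variance $\sim \sigma^2$, and where $W_v$ and $d_X(v) \sim \text{Bin}(|X|, p)$ are independent (depending on disjoint edges). Conditioning on $d_X(v) = j$ and telescoping through Lemma \ref{lemma:stupid result}.(ii),
\[
\mathbb{P}(0 < W_v \leq j) = j \cdot \mathbb{P}(W_v = 1) + \mathcal{O}(j^2/\sigma^2).
\]
A local CLT for the binomial difference gives $\mathbb{P}(W_v = 1) = (\sigma\sqrt{2\pi})^{-1} + \mathcal{O}(1/\sigma^2)$; averaging over $d_X(v)$ and summing over $v \in R \cup B$ (treating $v \in B$ symmetrically) yields $\mathbb{E}|T_R \cup T_B| = \tfrac{2|R||X|p}{\sigma\sqrt{2\pi}} + \mathcal{O}(|V|(1+|X|p)^2/\sigma^2)$. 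The discrepancy between $2|R|$ and $|V|$ contributes only $|X|^2 p/(\sigma\sqrt{2\pi})$, which is absorbed into the stated error since $\sigma \to \infty$.

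For the concentration, decompose
\[
\text{Var}(|T_R \cup T_B|) = \sum_{v \in R \cup B} \text{Var}(A_v) + \sum_{u \neq v} \text{Cov}(A_u, A_v).
\]
The diagonal is bounded by $\mathbb{E}|T_R \cup T_B| = \mathcal{O}(|R||X|p/\sigma)$. For the off-diagonal, the key observation is that for distinct $u, v \in R \cup B$ the indicators $A_u$ and $A_v$ share only the single random edge $e := \{u, v\}$. Setting $\delta_u := A_u|_{e=1} - A_u|_{e=0}$ and similarly $\delta_v$, a direct expansion gives
\[
\text{Cov}(A_u, A_v) = p(1-p) \, \mathbb{E}[\delta_u] \, \mathbb{E}[\delta_v],
\]
the factorization holding because $\delta_u$ depends only on edges at $u$ other than $e$, and $\delta_v$ only on edges at $v$ other than $e$, which are disjoint. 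In each of the four cases for $(u,v) \in (R \cup B)^2$, one computes $\delta_u = \pm(\mathbf{1}[W_u = a_u] - \mathbf{1}[W_u = b_u])$ with $|a_u - b_u| = d_X(u)$, and Lemma \ref{lemma:stupid result}.(ii) (telescoping) gives $|\mathbb{E}[\delta_u]| = \mathcal{O}(\mathbb{E}[d_X(u)]/\sigma^2) = \mathcal{O}(|X|p/\sigma^2)$. Using $(2|R|)^2 p(1-p) = 2|R|\sigma^2$, the off-diagonal contribution is $\mathcal{O}(|R|(|X|p)^2/\sigma^2)$, and Chebyshev with $t := \sqrt{|V|}(|X|p/\sigma)^{1/2 - \varepsilon}$ gives
\[
\frac{\text{Var}(|T_R \cup T_B|)}{t^2} = \mathcal{O}\!\left((|X|p/\sigma)^{2\varepsilon}\right) + \mathcal{O}\!\left((|X|p/\sigma)^{1+2\varepsilon}\right) = o(1)
\]
under the hypothesis $|X|p = o(\sigma)$ (with $|R|/|V| \leq 1$).

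The main obstacle is extracting the cancellation in the covariance. The naive bound $|\mathbb{E}[\delta_u]| \leq \mathbb{P}(W_u = a_u) + \mathbb{P}(W_u = b_u) = \mathcal{O}(1/\sigma)$ (from Lemma \ref{lemma:stupid result}.(i) alone) produces an off-diagonal contribution of order $\Theta(|R|)$, which dwarfs $t^2$ and is useless. One must instead treat $\delta_u$ as the \emph{difference} of two point probabilities at locations separated by $d_X(u)$ and invoke the smoothness estimate in part (ii) of Lemma \ref{lemma:stupid result}; this produces the crucial extra factor $d_X(u)/\sigma$ whose expectation is $\Theta(|X|p/\sigma)$, just enough to push the off-diagonal contribution below $t^2$.
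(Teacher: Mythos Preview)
Your proposal is correct and follows essentially the same route as the paper: a second-moment argument where the expectation is computed by telescoping point probabilities via Lemma~\ref{lemma:stupid result}(ii), and the covariance is factored through the single shared edge as $p(1-p)\,\mathbb{E}[\delta_u]\,\mathbb{E}[\delta_v]$ with each factor bounded by $\mathcal{O}(|X|p/\sigma^2)$ using Lemma~\ref{lemma:stupid result}(ii) again. The only cosmetic difference is that the paper treats $|T_R|$ and $|T_B|$ separately (so only the within-$R$ and within-$B$ covariances appear), whereas you bound $\mathrm{Var}(|T_R|+|T_B|)$ directly and so must also handle the cross terms; your ``four cases'' remark covers this and the bounds are identical.
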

Assuming this lemma, we now proceed to a proof of the theorem at hand.
\begin{proof}[Proof of Theorem \ref{theorem:lead of 3 will win}]
Suppose there are initially $\Delta$ more red vertices than blue, and by monotonicity of the majority dynamics process, we may first assume $\Delta = o(n^{1/5})$ [since reducing $\Delta$ only decreases the quantity in question, and this would only change the integral by at most $o(1)$].  Before we run the majority dynamics process, let us consider the following coupling.  We first decompose the vertices of $G$ as $V = X \cup R_0 \cup B_0$, where $|R_0| = |B_0|$ and $|X| = \Delta$.  The vertices in $R_0$ will be initially red, the vertices of $B_0$ initially blue, and the vertices of $X$ will either be initially all red or initially all blue with these two options being equally likely.

Suppose we sample the random graph $G_{n,p}$, but we have not yet decided on the color for $X$.  For each $\lambda \in \{\red, \blue\}$, consider the majority dynamics process that would evolve if $X$ is initially colored $\lambda$, and define $W(\lambda)$ to be either (i) $\tie$ if this process does not lead to unanimity or (ii) the unanimous color that the process eventually attains.

In this language, we seek to bound $\mathbb{P}(W(\red) = \red)- \mathbb{P}(W(\red) = \blue)$.  By monotonicity, we have $\mathbb{P}(W(\red) = \blue \text{ and } W(\blue) \neq \blue) = 0$.  Using this and exploiting symmetry, we have
\begin{eqnarray*}
\mathbb{P}(W(\red) = \red) - \mathbb{P}(W(\red) = \blue) &=&  \mathbb{P}(W(\blue) = \blue \text{ and } W(\red) = \red) + \mathbb{P}(W(\blue) = \blue \text{ and } W(\red) = \tie)\\
&\geq& \mathbb{P}(W(\blue) = \blue \text{ and } W(\red) = \red).
\end{eqnarray*}
Let $|R_i (\lambda)|$ denote the number of red vertices that the graph would have after $i$ steps if $X$ is initially colored $\lambda$.  By Proposition \ref{lemma:sex panther}, with probability $1 - o(1)$, we have the two implications
\[
|R_1 (\red)| \geq n/2 + 0.85 \sqrt{n} \quad \Rightarrow \quad |R_2 (\red)| \geq 0.501 n\quad \Rightarrow \quad |R_3(\red)| \geq 0.999n.
\]
By the same reasoning as in the proof of Theorem \ref{theorem:improving Benjamini, ODonnell}, this in turn would imply $R_4(\red) = V$ with probability tending to 1.  Therefore, with high probability $|R_1 (\red)| \geq n/2 + 0.85 \sqrt{n}$ would imply $W(\red) = \red$ and similarly $|R_1 (\blue)| < n/2 - 0.85 \sqrt{n}$ would imply $W(\blue) = \blue$.

Let $Z_0$ denote the number of vertices that would be red after one step if the set $X$ were completely removed from $G$.  Since $|X| = o(n^{1/5})$, we can use Lemma \ref{lemma:sway over some vertices} (see above) with $\varepsilon = 1/6$ to argue that with high probability
\begin{eqnarray*}
\Bigg| |R_1 (\red)| - |R_1(\blue)| - 2 |X| \sqrt{n/(2\pi)} \Bigg| &\leq& \Big| |R_1 (\red)| - Z_0 - |X| \sqrt{n/(2\pi)} \Big| \\
& & \qquad \qquad + \Big| |R_1 (\blue)| - Z_0 + |X| \sqrt{n/(2\pi)} \Big|\\
&\leq& 2\left(c n^{0.4} +  c|X|^2 + |X|\right) \leq C n^{0.4},
\end{eqnarray*}
for some constants $c$ and $C$ (here, the term $c |X|^2$ is from the error term in the estimate of $\mathbb{E}[|T_R \cup T_B|]$, and $|X|$ is to account for the number of red vertices that would be in $X$ after one step).  Thus, we have
\begin{eqnarray*}
& & \mathbb{P}(W(\red) = \red) - \mathbb{P}(W(\red) = \blue) \geq \mathbb{P}(W(\blue) = \blue \text{ and } W(\red) = \red)\\
& & \quad \qquad \qquad \geq -o(1) + \mathbb{P}\left(0.85 \sqrt{n} \leq  |R_1 (\red)| -n/2 \leq (2|X|/\sqrt{2\pi}-0.85) \sqrt{n} - 2C n^{0.4} \right).
\end{eqnarray*}
Finally, using the central limit law for $|R_1 (\red)|$ given by Theorem \ref{good CLT} together with the fact that $\Delta = |X|$, we obtain that the above expression is at least
\begin{eqnarray*}
\mathbb{P}(W(\red) = \red) - \mathbb{P}(W(\red) = \blue) &\geq& -o(1) + \mathbb{P}\left(\left| \dfrac{|R_1 (\red)| - \mathbb{E}|R_1 (\red)|}{\sqrt{n} /2} \right | \leq 2(|X|/\sqrt{2\pi}-0.85) \right)\\
& \geq & -o(1) + \dfrac{2}{\sqrt{2\pi}} \int_{0} ^{2(\Delta/\sqrt{2 \pi} - 0.85)} e^{-x^2 /2} dx.\qedhere
\end{eqnarray*}
\end{proof}

\bibliographystyle{plain}
\bibliography{myBib.bib}

\section{Appendix}
\subsection*{Proof of Lemma \ref{lemma:six Fourier facts}}
\begin{proof} We prove each claim in turn.
\paragraph*{Claims (i) and (ii):} These essentially follow immediately from the facts that $Z_v + \mu_v \in \{-1,1\}$, that $Z_v$ has mean 0, and that $Z_v$ depends only on $\Gamma_{v}$.  We'll prove $1 - \mu_v ^2 = 4 \mu + \mathcal{O}(1/\sigma)$, in the case that $v = r \in R_0$ (the case $v \in B_0$ being analogous), for which we have
\begin{eqnarray*}
1 - \mu_r ^2 &=& 1 - \Bigg( 2\mathbb{P}\Big(Bin(|R_0|-1,p) \geq Bin(|B_0|,p) \Big) -1\Bigg)^2\\
&=& 4 \cdot \mathbb{P}\Big(Bin(|R_0|-1,p) \geq Bin(|B_0|,p) \Big) \cdot \mathbb{P}\Big(Bin(|R_0|-1,p) < Bin(|B_0|,p) \Big).
\end{eqnarray*}
And then we simply use the facts\footnote{To prove these, without loss of generality, suppose $y \leq x = \Omega(n)$.  We then condition on the value of $Bin(y,p)$ and use that $\mathbb{P}(Bin(x,p) = t) = \mathcal{O}(1/\sqrt{xp(1-p)}) = \mathcal{O}(1/\sigma)$ uniformly for all $t$.} that if $x+y = \Omega(n)$ then [with $\sigma = \sqrt{np(1-p)}$]
\begin{eqnarray*}
\mathbb{P}\Big(Bin(x,p) \geq Bin(y,p) \Big) &=& \mathbb{P}\Big(Bin(x+1,p) \geq Bin(y,p) \Big) + \mathcal{O}(1/\sigma),\\
\mathbb{P}\Big(Bin(x,p) \geq Bin(y,p) \Big) &=& \mathbb{P}\Big(Bin(x,p) \geq Bin(y+1,p) \Big) + \mathcal{O}(1/\sigma),\\
\mathbb{P}\Big(Bin(x,p) > Bin(y,p) \Big) &=& \mathbb{P}\Big(Bin(x,p) \geq Bin(y,p) \Big) + \mathcal{O}(1/\sigma).
\end{eqnarray*}

\paragraph*{Claims (iii) and (iv):}  For these, suppose $\emptyset \neq S \subseteq \mathcal{E}$.  Write $S$ as $S = S_1 \cup S_2$ where $S_1 = S \setminus (R_0 \times B_0)$ and $S_2 = S \cap (R_0 \times B_0)$ [so $S$ consists of $|S_1|$ edges from $v$ to a vertex of the same color and $|S_2|$ edges from $v$ to the other color].  Define the random variable
\[
W_{v;S} := \begin{cases} Bin(|R_0|-1-|S_1|, p)-Bin(|B_0|-|S_2|,p) \qquad &\text{ if $v \in R_0$}\\Bin(|B_0|-1-|S_1|, p)-Bin(|R_0|-|S_2|,p) \qquad &\text{ if $v \in B_0$}.\end{cases}
\]
Then by conditioning on the edges in $S$, we have (for $S \neq \emptyset$)
\begin{eqnarray*}
\widehat{Z_v}(S) &=& \mathbb{E}\left[Z_v (\vec{x}) \Phi_{S} (\vec{x}) \right]\\
&=& \sum_{I \subseteq S_1} \sum_{J \subseteq S_2} p^{|I|+|J|} (1-p)^{|S|-|I|-|J|} \bigg[2 \mathbb{P}\Big(W_{v;S} \geq |J|-|I|\Big) -1 \bigg]\\
& & \qquad \times \left( \dfrac{2-2p}{2\sqrt{p(1-p)}} \right)^{|I|+|J|} \left(\dfrac{-2p}{2\sqrt{p(1-p)}} \right)^{|S|-|I|-|J|}\\
&=& 2 \left(-\sqrt{p(1-p)} \right)^{|S|} \sum_{I \cup J \subseteq S} (-1)^{|I|+|J|} \mathbb{P}\Big(W_{v;S} \geq |J|-|I|\Big).
\end{eqnarray*}
Since $S \neq \emptyset$, let $e \in S$.  If $e \in S_1$, then by conditioning on whether $e \in I \cup J$, we have
\begin{eqnarray*}
& & \sum_{I \cup J \subseteq S} (-1)^{|I|+|J|} \mathbb{P}\Big(W_{v;S} \geq |J|-|I|\Big)\\
& & \qquad \quad = \sum_{I \cup J \subseteq (S \setminus \{e\})} (-1)^{|I|+|J|} \Bigg[ \mathbb{P}\Big(W_{S} \geq |J|-|I|\Big) - \mathbb{P}\Big(W_{v;S} \geq |J|-|I| - 1 \Big)\Bigg]\\
& & \qquad \quad = -\sum_{I \cup J \subseteq (S \setminus \{e\})} (-1)^{|I|+|J|} \Bigg[ \mathbb{P}\Big(W_{v;S} = |J|-|I|-1\Big) \Bigg].
\end{eqnarray*}
Similarly if $e \in S_2$, then $\displaystyle \widehat{Z_v}(S) = 2(-\sqrt{p(1-p)})^{|S|} \sum_{I \cup J \subseteq (S \setminus \{e \})} (-1)^{|I|+|J|}\mathbb{P}\Big(W_{v;S} = |J|-|I| \Big)$.  In either case, if $|S| \leq 10 k^2$ is bounded, these probabilities are at most $\mathcal{O}(1/\sqrt{np(1-p)})$, establishing (iii).  We prove (iv) by picking some other edge $e' \in S$ and conditioning on whether or not $e' \in I \cup J$ to obtain
\begin{eqnarray*}
|\widehat{Z}_v (S)| &\leq& 2(\sqrt{p(1-p)})^{|S|} \sum_{I \cup J \subseteq (S \setminus \{e, e'\})} \sup_{L} \bigg | \mathbb{P}(W_{v;S} = L+1)-\mathbb{P}(W_{v;S} = L) \bigg|\\
&=& \mathcal{O} \left(\left(\sqrt{p(1-p)}\right)^{|S|}\sup_{L} \bigg | \mathbb{P}(W_{v;S} = L+1)-\mathbb{P}(W_{v;S} = L) \bigg| \right).
\end{eqnarray*}
And this supremum is $\mathcal{O}\left( \frac{1}{np(1-p)} \right)$ by Lemma \ref{lemma:stupid result}.

\paragraph*{Claim (v):} For this, we first note that by claim (ii), this sum reduces to only two terms:
\begin{eqnarray*}
\sum_{e \in E} \Big(\widehat{Z_r} (e) - \widehat{Z_b}(e) \Big) \widehat{Z_v} (e) &=& \widehat{Z_r} (r v) \widehat{Z_v} (r v) - \widehat{Z_b}(b v) \widehat{Z_v} (b v)\\
&=& \widehat{Z_v} (r v) \Big( \widehat{Z_r} (r v) + \widehat{Z_b}(b v) \Big) - \widehat{Z_b}(b v) \Big( \widehat{Z_v} (b v) + \widehat{Z_v} (r v) \Big).
\end{eqnarray*}
Suppose $r' \in R_0$ and $b' \in B_0$ are additional vertices.  From our computation in (iii), we have
\begin{eqnarray*}
\widehat{Z_{r}}(r r') &=& 2 \sqrt{p(1-p)} \cdot \mathbb{P}\Big(Bin(|R_0|-2,p)-Bin(|B_0|,p) = -1\Big),\\
\widehat{Z_{b}}(b b') &=& 2 \sqrt{p(1-p)} \cdot \mathbb{P}\Big(Bin(|B_0|-2,p)-Bin(|R_0|,p) = -1\Big),\\
\widehat{Z_{r}}(r b) = \widehat{Z_{b}}(r b) &=& -2 \sqrt{p(1-p)} \cdot \mathbb{P}\Big(Bin(|R_0|-1,p)-Bin(|B_0|-1,p) = 0\Big).
\end{eqnarray*}
By Lemma \ref{lemma:stupid result}, the above three probabilities are all within $\dfrac{C}{np(1-p)}$ of each other, which implies $\widehat{Z_r}(rv) + \widehat{Z_b}(bv)$ as well as $\widehat{Z_v}(bv) + \widehat{Z_v}(rv)$ are both at most $\mathcal{O}\left( \dfrac{1}{n\sqrt{p(1-p)}}\right)$.  Combining this with claim (iii) then establishes (v).

\paragraph*{Claim (vi):} For this, we use the fact that $Z_v + \mu_v \in \{-1,1\}$ to obtain
\begin{eqnarray*}
Z_v ^L &=& (Z_v + \mu_v - \mu_v) ^L = \sum_{i=0} ^{L} {L \choose i} (Z_v +\mu_v) ^{i} (-\mu_v)^{L-i}\\
&=& Z_v \dfrac{(1-\mu_v)^L - (-1)^L (1+\mu_v)^L}{2} + (1-\mu_v ^2) \dfrac{(1-\mu_v)^{L-1} + (-1)^L (1+\mu_v)^{L-1}}{2}. \qedhere
\end{eqnarray*}
\end{proof}

\subsection*{Proof of Lemma \ref{lemma:sway over some vertices}}
\begin{proof}
We will prove that with high probability $|T_R|$ and $|T_B|$ are both close to their means and that each has expected value of about $\frac{(|V|/2) |X| p}{\sigma \sqrt{2\pi}}$, which will finish the proof since $T_R \cap T_B = \emptyset$.  For this, we'll focus on $T_R$ (the case of $T_B$ being almost identical).  In what follows, let $m = |R| = |B|$.

For $v \in R$, let $T_v$ denote the indicator for the event that $0 < d_{B} (v) - d_{R} (v) \leq d_{X} (v)$.  Then we have $|T_R| = \sum_{v \in R} T_v$.  So the expected value of $|T_R|$ is $m$ times $\mathbb{E}[T_v]$, and moreover
\begin{eqnarray*}
\mathbb{E}[T_v] &=& \mathbb{P}(0 < Bin(m,p) - Bin(m-1,p) \leq Bin(|X|,p))\\
&=& \sum_{k=1} ^{|X|} \sum_{i=1} ^{k} \mathbb{P}(Bin(m,p) - Bin(m-1,p) = i) \mathbb{P}(Bin(|X|,p)=k)
\end{eqnarray*}

Let $W = Bin(m,p)-Bin(m-1,p)$.  For each $i$ we have $\Big| \mathbb{P}(W=i) - \mathbb{P}(W=0) \Big| \leq C i / \sigma^2$, by Lemma \ref{lemma:stupid result}.  Therefore we have
\begin{eqnarray*}
\mathbb{E}[T_v] &=& \sum_{k=1} ^{|X|} \sum_{i=1} ^{k} \left(\mathbb{P}(W=0) + \mathcal{O}(i/\sigma^2) \right) \mathbb{P}(Bin(|X|,p)=k)\\
&=& \mathbb{P}(W=0)\mathbb{E}[Bin(|X|,p))] + \mathcal{O}\left( \dfrac{1}{\sigma^2} \sum_{k=1} ^{|X|} \dfrac{k(k+1)}{2} \mathbb{P}(Bin(|X|,p)=k) \right)\\
&=& \mathbb{P}(W=0)|X|p + \mathcal{O}\left( \dfrac{1}{mp(1-p)} |X|p (1+|X|p) \right) = \mathbb{P}(W=0)|X|p + \mathcal{O}\left( \dfrac{(1+|X|p)^2}{\sigma^2} \right)\\
&=& \dfrac{|X|p}{\sigma \sqrt{2\pi}} + \mathcal{O}\left( \dfrac{(1+|X|p)^2}{\sigma^2} \right).
\end{eqnarray*}
Moreover, for $u \neq v$, the covariance $Cov(T_u, T_v)$ is given by
\begin{eqnarray*}
Cov(T_u, T_v) &=& p(1-p) \Bigg[ \mathbb{E}(T_u | u \sim v) - \mathbb{E}(T_u | u \not \sim v) \Bigg] \Bigg[ \mathbb{E}(T_v | u \sim v) - \mathbb{E}(T_v | u \not \sim v) \Bigg]\\
&=& p(1-p) \Bigg[ \mathbb{P}(Bin(m,p)-Bin(m-2,p) = Bin(|X|,p) + 1)\\
& & \qquad \qquad \qquad \qquad - \mathbb{P}(Bin(m,p)-Bin(m-2,p) = 1) \Bigg]^2\\
&\leq& p(1-p) \mathbb{E}\left[ \left(\dfrac{C Bin(|X|,p)}{mp(1-p)}\right)^2 \right] = \mathcal{O}\left( \dfrac{(1+|X|p)^2}{m \sigma^2} \right),
\end{eqnarray*}
where the last inequality again follows from Lemma \ref{lemma:stupid result}.  Thus, the variance of $|T_R|$ is satisfies
\begin{eqnarray*}
Var(|T_R|) &=& m Var(T_u) + m(m-1) Cov(T_u, T_v) = \mathcal{O}\left( \dfrac{m|X|p}{\sigma} \right) + \mathcal{O}\left( \dfrac{m (1+p|X|)^2}{\sigma^2} \right)\\
&=& \mathcal{O}(m|X|p/\sigma) = \mathcal{O} \left ( |V| p \dfrac{|X|}{\sigma} \right) = o \left ( |V| \cdot \left[\dfrac{|X|p}{\sigma} \right]^{1-2\varepsilon} \right),
\end{eqnarray*}
which holds because of the assumption that $1+|X|p = o\Big(\sigma \Big)$.  Thus, the second-moment method provides the desired concentration statement of $|T_R|$ about its mean.
\end{proof}
\end{document}